\newtheorem{thm}{Theorem}[section]
\newtheorem{lem}{Lemma}[section]
\newtheorem{cor}{Corollary}[section]
\newtheorem{pro}{Proposition}[section]
\newtheorem{remark}{Remark}[section]
\theoremstyle{definition}
\begin{document}
\title{Enumeration of spanning trees and resistance distances of generalized blow-up graphs \footnote{This research was partially supported by National Natural Science Foundation of China
{(No. 12371347), the Natural Science Foundation of Hubei Province (Grant
No. 2025AFD006) and the Foundation of Hubei Provincial Department of Education (Grant No. Q20232505).}}}
\author{{\bf Hechao Liu$^{a}$}, {\bf Lu Li$^{a}$},
{\bf Lihua You$^{b}$}\thanks{Corresponding author.
E-mail addresses: hechaoliu@yeah.net(H. Liu), 1851590197@qq.com(L. Li), ylhua@scnu.edu.cn(L. You), hongbo\_hua@163.com(H. Hua), 2877147778@qq.com(L. Chen).}, {\bf Hongbo Hua$^{c}$}, {\bf Liang Chen$^{a}$}\\
{\footnotesize $^a$ Huangshi Key Laboratory of Metaverse and Virtual Simulation, School of Mathematics and Statistics,}\\ {\footnotesize Hubei Normal University, Huangshi, Hubei 435002, P.R. China} \\
{\footnotesize $^b$ School of Mathematical Sciences, South China Normal University, Guangzhou, Guangdong 510631, P.R. China}\\
{\footnotesize $^c$ Faculty of Mathematics and Physics, Huaiyin Institute of Technology, Huaian, Jiangsu 223003, P.R. China}\\
}
\date{}

\date{}
\maketitle
{\flushleft\large\bf Abstract}
Let $H$ be a graph with vertex set $V(H)=\{v_1, v_2, \cdots, v_k\}$. The generalized blow-up graph $H_{p_1,\ldots,p_k}^{q_1,\ldots,q_k}$ is constructed by replacing each vertex $v_i \in V(H)$ with the graph $G_i = p_iK_t \cup q_iK_1$$(i=1,2,\cdots,k)$, then connecting all vertices between $G_i$ and $G_j$ whenever $v_iv_j \in E(H)$.

In this paper, we enumerate the spanning trees in generalized blow-up graphs $H_{p_1, p_2, \cdots, p_k}^{q_1, q_2, \cdots, q_k}$, which extends the results of Ge [Discrete Appl. Math. 305 (2021) 145-153], Cheng, Chen and Yan [Discrete Appl. Math. 320 (2022) 259-269]. Furthermore, we determine the resistance distances and Kirchhoff indices of generalized blow-up graphs $H_{p_1, p_2, \cdots, p_k}^{q_1, q_2, \cdots, q_k}$, which extends the results of Sun, Yang and Xu [Discrete Math. 348 (2025) 114327], Xu and Xu [Discrete Appl. Math. 362 (2025) 18-33], Ni, Pan and Zhou [Discrete Appl. Math. 362 (2025) 100-108].

\begin{flushleft}
\textbf{Keywords:} Blow-up graph; Spanning tree; Resistance distance; Kirchhoff index

\end{flushleft}
\textbf{AMS Classification:} 05C12; 05C35

\section{Introduction}

The graphs considered in this study are loopless but may include parallel edges.
Let $G = (V(G), E(G),\omega)$ denote an edge-weighted graph with weight function \(\omega: E(G) \to \mathbb{R}^+\), where $V(G)$ represents the vertex set and $E(G)$ the edge set. If every edge in $G$ has a weight of 1, then $G$ is simply referred to as a graph.
The weighted graph $G = (V(G), E(G),\omega)$ admits an interpretation as an electrical network, where each edge $e$ is assigned a resistor with conductance $\omega(e)$ and resistance $r_{e}=\frac{1}{\omega(e)}$.
Furthermore, given a vertex-weighted graph $G=(V(G),E(G),\omega)$ endowed with a vertex weight function $\omega : V(G) \to \mathbb{R}^+$, it induces an edge-weighted graph in which the weight of each edge $e = (u, v)\in E(G)$ is defined as $\omega(u)\omega(v)$.

Let \(G = (V(G), E(G), \omega)\) be an edge-weighted graph. Denote by \(\mathcal{T}(G)\) the set of all spanning trees of \(G\). We define the weight of a spanning tree \(T \in \mathcal{T}(G)\) as the product \(\prod\limits_{e\in E(T)} \omega(e)\) over its constituent edges. The sum of weight of all spanning tree of \(G\), denoted \(\tau(G)\), is then given by \(\tau(G) = \sum\limits_{T\in\mathcal{T}(G)} \prod\limits_{e\in E(T)} \omega(e)\).
If $G$ is a graph with edge weight of 1, then \(\tau(G) = |\mathcal{T}(G)|\) is the number of spanning trees in $G$.
The enumeration of spanning trees in graphs constitutes a fundamental research topic with
applications spanning combinatorial mathematics, electronic network
theory, and interdisciplinary studies at the mathematics, physics and computer science interface.
First systematically investigated in 1847 by Kirchhoff \cite{bigg1974}, this problem has
evolved over 170 years of continuous development, as evidenced by recent advances in
\cite{cyan2022,geju2021,gjin2018,liya2023,zhli2019}.

Let $d_{G}(u,v)$ be the distances between vertex $u$ and vertex $v$ in $G$.
Let $R_{G}(u,v)$ be the resistance distances \cite{klei1993} between vertex $u$ and vertex $v$ in $G$.
Note that we use $r_{G}(uv)$ to denote the resistance distance of edge $uv\in E(G)$.
A vertex \( v \in V(G) \) is called a \emph{cut vertex} if $G $ is disconnects after removing \( v \).
A \emph{block} of \( G \) is a maximal connected subgraph of \( G \) containing no cut vertices.

A notable connection exists between resistance distance computation and spanning tree enumeration in graphs. The graphs for which resistance distances can be analytically determined closely corresponding to those admitting explicit formulas for their spanning tree counts.
The computation of effective resistance in resistor networks constitutes a foundational problem in both graph theory and electrical network analysis.
For the recent results about resistance distance and related problems can refer to \cite{chli2022,hden2014,huli2020,liti2022,lilz2020,lizw2022,lipa2016,liuy2024,zhtr2009}.

\begin{figure}[H]
\centering
\includegraphics[width=0.95\textwidth]{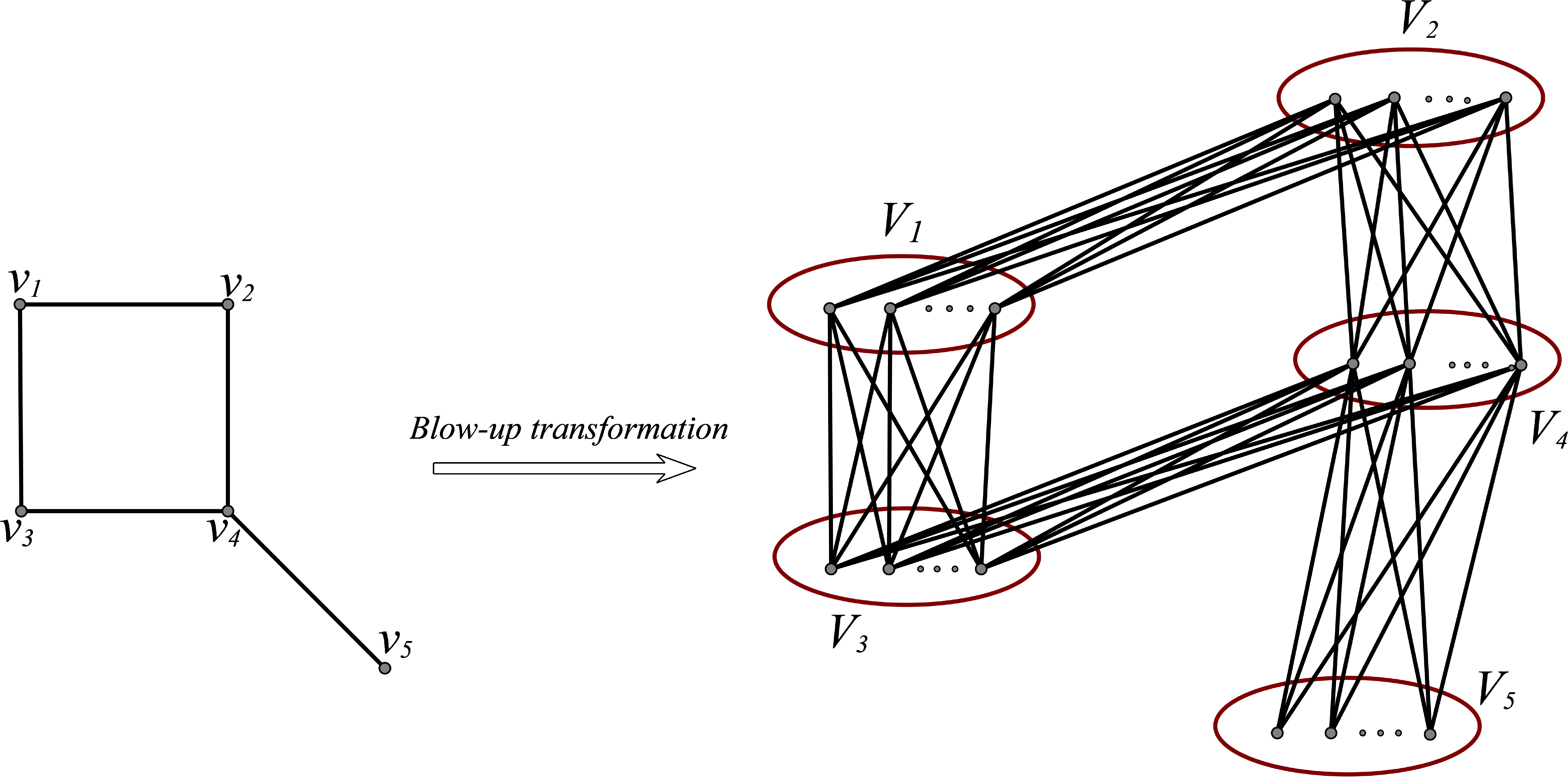}\\
\caption{An example for the blow-up transformation.
}\label{fig1}
\end{figure}

Let $H$ be a graph with vertex set $V(H)=\{v_1, v_2, \cdots, v_k\}$.
The generalized join graph $H[G_1, G_2, \cdots, G_k]$ is the graph obtained from $H$ by replacing every $v_{i}\in V(H)$ by a graph $G_{i}$ and joining every vertex in $G_{i}$ and every vertex in $G_{j}$ if $v_{i}v_{j}\in E(H)$.
Suppose the generalized join graph $H[G_1, G_2, \cdots, G_k]$ has vertex set $V(G_1)\cup V(G_2)\cup\cdots\cup V(G_k)$, where $|V(G_i)| = n_i$ and $V(G_i)=\{v_i^1, v_i^2, \cdots, v_i^{n_i}\}$ for $1\leq i\leq k$.
If $G_i = n_iK_{1}$, then we write $H[G_1, G_2, \cdots, G_k]$ as $H[n_{1},n_{2},\cdots,n_{k}]$.
If $G_i = p_iK_{2}\cup q_iK_{1}$, then $H[G_1, G_2, \cdots, G_k]$ is called the unbalanced blow-up graphs of $H$ \cite{suny2025}.
\textbf{If $G_i = p_iK_{t}\cup q_iK_{1}$, then
we write $H[G_1, G_2, \cdots, G_k]$ as $H_{p_1, p_2, \cdots, p_k}^{q_1, q_2, \cdots, q_k}$, and call it as the generalized blow-up graphs of $H$.}

The paper is structured as follows. Section~2 establishes fundamental notation and standard electrical network theory concepts. Section~3 derives a concise formula of the number of spanning trees for $H_{p_1, p_2, \cdots, p_k}^{q_1, q_2, \cdots, q_k}$, which extends the result of Ge [Discrete Appl. Math. 305 (2021) 145-153], Cheng, Chen and Yan [Discrete Appl. Math. 320 (2022) 259-269]. Section~4 determined the resistance distances and Kirchhoff indices in generalized blow-up graphs $H_{p_1, p_2, \cdots, p_k}^{q_1, q_2, \cdots, q_k}$, which extends the result of Xu and Xu [Discrete Appl. Math. 362 (2025) 18-33], Ni, Pan and Zhou [Discrete Appl. Math. 362 (2025) 100-108], Sun, Yang and Xu [Discrete Math. 348 (2025) 114327].

\section{Preliminaries}
In this section, we introduce some useful transformations and techniques.

\begin{pro}[\textbf{Principle of Elimination}, Klein \cite{kled2002}]\label{th01}
Let \( N \) be a connected network, and \( B \) a block of \( N \) containing exactly one cut vertex \( x \) of $N$. If \( H \) is the network obtained from $N$ by deleting all vertices of \( B \) except \( x \), then for any \( u, v \in V(H) \), we have
$R_H(u, v) = R_N(u, v).$
\end{pro}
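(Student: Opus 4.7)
The plan is to prove the claim via the current-flow characterization of resistance distance combined with the uniqueness of the electrical potential for a prescribed source-sink injection. Recall that $R_N(u,v)=\phi(u)-\phi(v)$, where $(i,\phi)$ is any current-potential pair on $N$ satisfying Ohm's law on every edge and Kirchhoff's current law at every vertex while injecting unit current at $u$ and extracting it at $v$; because $N$ is connected, such a pair exists and $\phi$ is unique up to an additive constant.

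First I would fix the unique current-potential pair $(i_H,\phi_H)$ on $H$ realizing this injection, so that $R_H(u,v)=\phi_H(u)-\phi_H(v)$. The next step is to lift it to $N$: set $i_N(e)=i_H(e)$ for $e\in E(H)$ and $i_N(e)=0$ for $e\in E(B)$; set $\phi_N(w)=\phi_H(w)$ for $w\in V(H)$ and $\phi_N(w)=\phi_H(x)$ for $w\in V(B)\setminus\{x\}$. This is well-defined because every vertex of $V(B)\setminus\{x\}$ lies in no block other than $B$ (otherwise it would be a cut vertex of $N$), so every edge of $N$ incident to such a vertex lies in $E(B)$, giving the disjoint decomposition $E(N)=E(H)\cup E(B)$ with $E(H)\cap E(B)=\emptyset$.

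I would then verify that $(i_N,\phi_N)$ is a valid current-potential pair on $N$ for the same injection. Ohm's law on $E(H)$ is inherited from $H$; on each edge of $B$ both endpoints carry potential $\phi_H(x)$, matching the prescribed zero current. Kirchhoff's current law at every vertex of $V(H)\setminus\{x\}$, including $u$ and $v$, is inherited since the edges incident to those vertices in $N$ coincide with those in $H$. At the cut vertex $x$, the $E(H)$-contribution reproduces the $H$-balance and the $E(B)$-contribution is zero, so the balance is preserved even when $u=x$ or $v=x$. At any $w\in V(B)\setminus\{x\}$, every incident edge lies in $E(B)$ and carries zero current. By the uniqueness of the electrical flow, $(i_N,\phi_N)$ must be the actual pair realizing $R_N(u,v)$, whence $R_N(u,v)=\phi_N(u)-\phi_N(v)=\phi_H(u)-\phi_H(v)=R_H(u,v)$.

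The main obstacle is organizational rather than analytic: one must make explicit the block-decomposition fact that every edge of $B$ has both endpoints in $V(B)$ (so the zero-current extension cannot conflict with an $H$-edge), and one must check that the argument survives the boundary cases $u=x$ or $v=x$, where the $H$-balance at $x$ already absorbs the external $\pm 1$ term. Once these bookkeeping points are settled, the statement reduces to the uniqueness of the harmonic potential on a connected resistor network.
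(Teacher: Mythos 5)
The paper does not prove this Proposition; it imports it verbatim from Klein's paper on resistance-distance sum rules as a known preliminary, so there is no in-paper argument to compare against. Your proof is correct and is the standard one: extend the unit $u$--$v$ electrical flow on $H$ to $N$ by zero current on $E(B)$ and constant potential $\phi_H(x)$ on $V(B)\setminus\{x\}$, then invoke uniqueness. You correctly isolate the two structural facts that make this work, namely that every vertex of $V(B)\setminus\{x\}$ is a non-cut vertex and hence lies in no block other than $B$, so that $E(N)=E(H)\cup E(B)$ is a disjoint decomposition and the zero-current extension is consistent, and you handle the boundary case $u=x$ or $v=x$ where the external $\pm1$ injection sits at the cut vertex. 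The one point you use silently is that $H$ itself is connected, which is needed for the pair $(i_H,\phi_H)$ to exist in the first place; this follows from the same observation you already make, since any $N$-path between two vertices of $V(H)$ that enters $V(B)\setminus\{x\}$ must enter and leave through $x$ and the excursion into $B$ can be excised at $x$. With that sentence added the argument is complete and self-contained, which is arguably an improvement over the paper's bare citation.
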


\begin{pro}[\textbf{\( S \)-Equivalent Networks}]\label{pro02}
Let \( N \) and \( M \) be two networks with \( S \subseteq V(N) \cap V(M) \). If $R_N(u, v) = R_M(u, v)$ for all \( u, v \in S \), then $N$ and $M$ are called $S$-equivalent or $N$ is $S$-equivalent to $M$.
\end{pro}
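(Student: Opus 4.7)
The statement labeled Proposition \ref{pro02} is in fact a definition: it merely names the relation ``$R_N(u,v) = R_M(u,v)$ for all $u,v \in S$'' as $S$-equivalence. So my plan is first to acknowledge that there is nothing substantive to prove; the proposition does not assert that any particular pair of networks is $S$-equivalent, nor that $S$-equivalent networks share any property beyond the defining one. It simply introduces vocabulary that will be invoked later (presumably when applying series/parallel reductions, the Y--$\Delta$ transform, or the star--mesh transform to simplify $H_{p_1,\ldots,p_k}^{q_1,\ldots,q_k}$ in Section~4).

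If one nevertheless wishes to justify that the terminology is well-posed, the natural thing to verify is that $S$-equivalence is an equivalence relation on the class of networks containing $S$ as a common vertex subset. I would check the three axioms in order: reflexivity is immediate from $R_N(u,v) = R_N(u,v)$; symmetry follows because the defining condition is symmetric in $N$ and $M$; transitivity follows from transitivity of equality in $\mathbb{R}$, since $R_N(u,v) = R_M(u,v)$ and $R_M(u,v) = R_{M'}(u,v)$ for every $u,v\in S$ force $R_N(u,v) = R_{M'}(u,v)$ for every $u,v\in S$.

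Because the content is definitional, there is no real obstacle to overcome. The only modest subtlety worth flagging is that $S$ is taken as a subset of $V(N)\cap V(M)$, so one is implicitly identifying the ``terminals'' of the two networks; any later application of the definition must be careful to specify this identification. The genuine work supported by this definition will appear later, when Proposition~\ref{th01} (elimination of a pendant block) and other classical reductions are recognized as instances of $S$-equivalence and chained together to reduce the blow-up network to a tractable form before computing $R$ and the Kirchhoff index.
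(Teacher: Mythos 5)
You are right that this ``Proposition'' is purely a definition, and the paper accordingly offers no proof of it; your reading matches the paper's treatment exactly. Your optional check that $S$-equivalence is an equivalence relation is correct and harmless, though not something the paper bothers to record.
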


\begin{pro}[\textbf{Principle of Substitution}, Gervacio \cite{gerv2016}]\label{th03}
If a subnetwork \( H \) of \( N \) is \( V(H) \)-equivalent to \( H^* \), then the modified network \( N^* \) (obtained by replacing \( H \) with \( H^* \)) satisfies
$ R_N(u, v) = R_{N^*}(u, v)$  for all $u, v \in V(N)$,
i.e., \( N \) is \( V(N) \)-equivalent to \( N^* \).
\end{pro}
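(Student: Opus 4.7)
The plan is to leverage the electrical-network interpretation of effective resistance and to show that the external behaviour of a subnetwork on its boundary is captured entirely by its pairwise boundary resistances. Setting $S=V(H)$, I would first recall that for any $u,v\in V(N)$ the effective resistance $R_N(u,v)$ equals the voltage drop $\varphi(u)-\varphi(v)$ in the unique (up to an additive constant) harmonic voltage function $\varphi:V(N)\to\mathbb{R}$ produced by injecting a unit current at $u$ and extracting it at $v$. Restricting $\varphi$ to $S$, the currents entering $H$ from the rest of $N$ at each vertex of $S$ form a vector in $\mathbb{R}^{|S|}$ depending linearly on $\varphi|_S$ through the Schur complement $L^{bd}_H$ of the Laplacian $L_H$ with respect to the interior vertices of $H$. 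Hence, swapping the interior of $H$ for any alternative subnetwork whose boundary Schur complement on $S$ coincides with $L^{bd}_H$ leaves $\varphi$ unchanged on $V(N)\setminus\mathrm{int}(H)$, and in particular preserves every voltage drop measured there.

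The second step is a linear-algebraic lemma: for a connected subnetwork, the matrix $L^{bd}_H$ is uniquely recoverable from the $S\times S$ boundary resistance matrix $R_H$ via the classical pseudoinverse identity $\bigl(L^{bd}_H\bigr)^{+}=-\tfrac{1}{2}\bigl(I-\tfrac{1}{|S|}J\bigr)R_H\bigl(I-\tfrac{1}{|S|}J\bigr)$, and conversely $L^{bd}_H$ is the Moore--Penrose inverse of the right-hand side. Thus the hypothesis of $V(H)$-equivalence, namely $R_H(u,v)=R_{H^*}(u,v)$ for all $u,v\in S$, forces $L^{bd}_H=L^{bd}_{H^*}$, so that $H$ and $H^*$ are electrically indistinguishable as seen from the rest of the network.

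Combining these two steps, the voltage function produced in $N^*$ by injecting/extracting unit currents at any $u,v\in V(N)\cap V(N^*)$ restricts to the same function on $V(N)\setminus\mathrm{int}(H)$ as the one produced in $N$, so the corresponding voltage drops coincide and $R_N(u,v)=R_{N^*}(u,v)$. The principal obstacle is the bijection between boundary resistance matrices and Schur-complement Laplacians on $S$: both objects carry a one-dimensional degeneracy (constant shifts in voltage on the one side, the all-ones vector in the kernel of $L^{bd}_H$ on the other), and although the centering--pseudoinverse identity above is standard, justifying it rigorously in the subnetwork setting is where the real work lies. Once that identification is in hand, harmonic extension, linearity of the boundary response, and restriction of $\varphi$ to $V(N)\setminus\mathrm{int}(H)$ are entirely routine.
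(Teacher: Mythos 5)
The paper offers no proof of this proposition: it is stated in the preliminaries as a known result attributed to Gervacio, so there is no in-paper argument to compare yours against. Your proof is the standard one for the substitution principle --- encode the external behaviour of the subnetwork by its Dirichlet-to-Neumann (Schur-complement) response matrix on $S=V(H)$, recover that response matrix from the pairwise boundary resistances via the centering identity $(L^{bd}_H)^{+}=-\tfrac{1}{2}(I-\tfrac{1}{|S|}J)R_H(I-\tfrac{1}{|S|}J)$, and conclude that two subnetworks with identical boundary resistance matrices induce identical Kirchhoff equations on $V(N)\setminus\mathrm{int}(H)$, hence identical voltages and effective resistances there. This is correct in outline, and you rightly identify the resistance-matrix/response-matrix bijection as the only nontrivial step. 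Two caveats are worth making explicit if you write this out. First, here $S=V(H)$ is the whole vertex set of $H$, so $H$ has no interior vertices; the Schur complement only does real work on the $H^{*}$ side, where the paper's replacement gadgets (the mesh-star and double-star transformations) do introduce new internal vertices --- and also, importantly, carry \emph{negative} conductances. For such signed networks the response matrix is symmetric with zero row sums but not positive semidefinite, and your recovery $L^{bd}=((L^{bd})^{+})^{+}$ requires $\ker L^{bd}=\mathrm{span}(\mathbf{1})$, which follows from connectivity only in the positive-weight case and must be verified separately for the gadgets actually used. Second, one must check that eliminating the internal vertices of $H^{*}$ commutes with gluing $H^{*}$ into $N$, which holds because those internal vertices have no edges to $V(N)\setminus V(H)$. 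Neither point breaks your argument; they are precisely the hypotheses under which the principle is invoked in Sections 3 and 4.
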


\begin{remark}
The series and parallel principles represent the most common special cases of substitution. Applying these principles to network \( N \) will produce a network \( M \) satisfying
$R_N(u, v) = R_M(u, v)$ for all $u, v \in V(M).$
\end{remark}

\begin{figure}[H]
\centering
\includegraphics[width=0.75\textwidth]{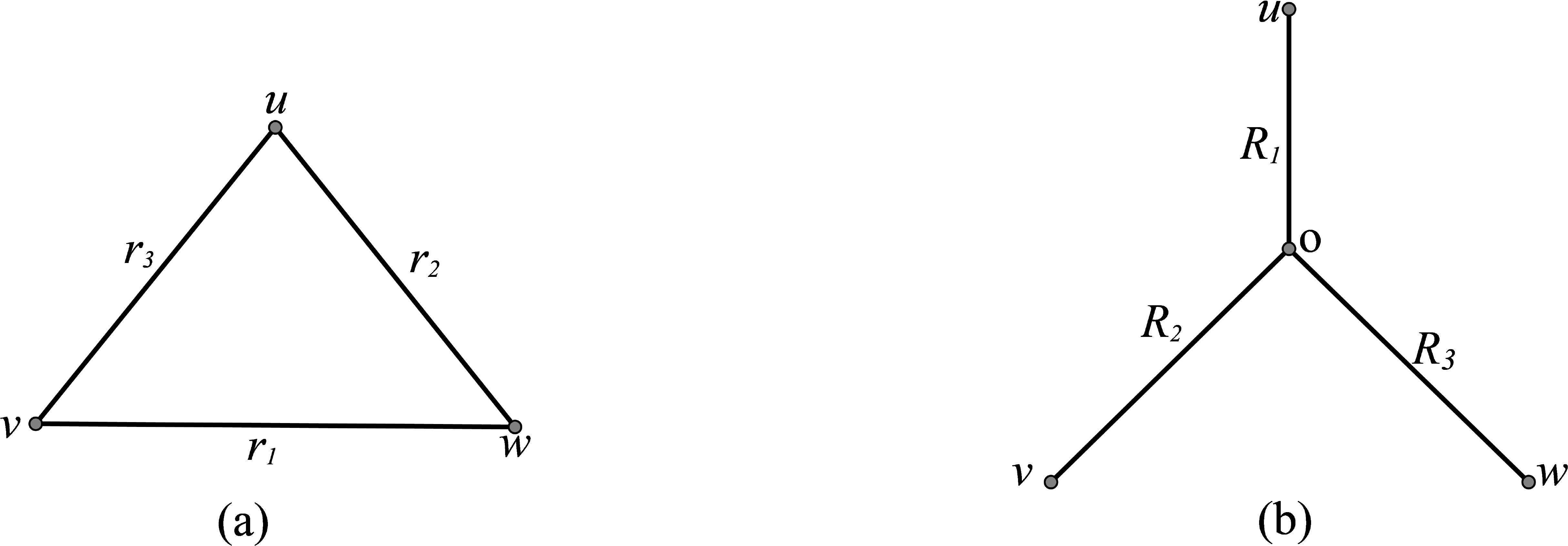}\\
\caption{The Star-Triangle Transformation.
}\label{fig01}
\end{figure}

\begin{pro}[\textbf{Star-Triangle Transformation}, Kennelly \cite{kenn1899}]\label{pro04}
Let \( N \) ($\Delta$-network) and \( M \) (Y-network) be two electrical networks as shown in Figure \ref{fig01}. Their equivalent resistance relationships are given by

{\rm (i)} $\Delta$-to-Y conversion
$$R_{1} = \frac{r_{2}r_{3}}{r_{1} + r_{2} + r_{3}},
R_{2} = \frac{r_{1}r_{3}}{r_{1} + r_{2} + r_{3}},
R_{3} = \frac{r_{1}r_{2}}{r_{1} + r_{2} + r_{3}}$$

{\rm (ii)} Y-to-$\Delta$ conversion
$$r_{1} = R_{2} + R_{3} + \frac{R_{2}R_{3}}{R_{1}},
r_{2} = R_{1} + R_{3} + \frac{R_{1}R_{3}}{R_{2}},
r_{3} = R_{1} + R_{2} + \frac{R_{1}R_{2}}{R_{3}},$$
where \( r_i \) denote the resistances of edges in $\Delta$-network and \( R_i \) the resistances of edges in Y-network.
\end{pro}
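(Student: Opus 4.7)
The plan is to verify both conversions by computing, in each of the two networks of Figure~\ref{fig01}, the pairwise effective resistances between the three terminal vertices using only series--parallel reductions, and then matching them via the $S$-equivalence criterion of Proposition~\ref{pro02} with $S$ the set of three terminals.

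First, in the $\Delta$-network, between any two terminals there is a single direct edge in parallel with a two-edge series path routed through the third terminal; the parallel rule immediately yields
$$R^{\Delta}_{23}=\tfrac{r_1(r_2+r_3)}{r_1+r_2+r_3},\quad R^{\Delta}_{13}=\tfrac{r_2(r_1+r_3)}{r_1+r_2+r_3},\quad R^{\Delta}_{12}=\tfrac{r_3(r_1+r_2)}{r_1+r_2+r_3}.$$
In the $Y$-network, when current flows between any two terminals, the third terminal is a dead end, so its radial edge carries no current by Kirchhoff's current law, and the effective resistance is simply the series sum of the two remaining radial resistances: $R^{Y}_{ij}=R_i+R_j$. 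Equating the $\Delta$ and $Y$ values across all three pairs of terminals produces the system of three equations I need.

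For the $\Delta$-to-$Y$ direction, summing the three equations yields $R_1+R_2+R_3 = \tfrac{r_1r_2+r_1r_3+r_2r_3}{r_1+r_2+r_3}$, and subtracting each single equation from this total recovers the three stated formulas, e.g.\ $R_1 = \tfrac{r_2r_3}{r_1+r_2+r_3}$. For the $Y$-to-$\Delta$ direction, I would substitute these $\Delta$-to-$Y$ formulas into the symmetric quantity $R_1R_2+R_2R_3+R_1R_3$; after a short calculation it telescopes to $\tfrac{r_1r_2r_3}{r_1+r_2+r_3}$, so dividing by $R_i$ gives $r_i = (R_1R_2+R_2R_3+R_1R_3)/R_i$, which rearranges to the stated form $r_i = R_j+R_k+\tfrac{R_jR_k}{R_i}$.

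There is no substantive obstacle beyond careful bookkeeping of which triangle edge lies opposite which terminal in the figure; the only conceptual point worth flagging is the dead-end argument in the $Y$-network, which justifies ignoring the idle radial edge when measuring resistance between the remaining two terminals, and which is in turn a direct consequence of Kirchhoff's current law at a degree-one vertex with no external current.
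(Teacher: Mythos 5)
Your proof is correct; note that the paper itself offers no proof of Proposition \ref{pro04}, stating it as a classical result of Kennelly with a citation, so any comparison is with the standard textbook derivation rather than with an argument in the text. Your route is exactly that standard derivation, and it fits the paper's framework precisely: Proposition \ref{pro02} \emph{defines} $S$-equivalence as equality of all pairwise effective resistances on $S$, so for $S$ the three terminals it suffices to match the three two-point resistances, which is what you do. The individual steps all check out: the parallel/series reduction in the $\Delta$-network gives $R^{\Delta}_{jk}=\frac{r_i(r_j+r_k)}{r_1+r_2+r_3}$ with $r_i$ the edge opposite terminal $i$; the dead-end observation (zero current through the idle radial edge by Kirchhoff's current law at a degree-one vertex with no external injection) gives $R^{Y}_{jk}=R_j+R_k$; summing and subtracting the three resulting equations yields $R_i=\frac{r_jr_k}{r_1+r_2+r_3}$; and the identity $R_1R_2+R_2R_3+R_3R_1=\frac{r_1r_2r_3}{r_1+r_2+r_3}$ inverts this to $r_i=\frac{R_1R_2+R_2R_3+R_3R_1}{R_i}=R_j+R_k+\frac{R_jR_k}{R_i}$. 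The only point worth making explicit, if one wanted equivalence in the stronger sense of identical three-terminal response to arbitrary current injections (which is what the Principle of Substitution really exploits), is that for a three-terminal network the $\binom{3}{2}=3$ pairwise resistances determine the full response matrix, so no information is lost; but under the paper's own definitions this is not required.
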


\begin{figure}[H]
\centering
\includegraphics[width=0.95\textwidth]{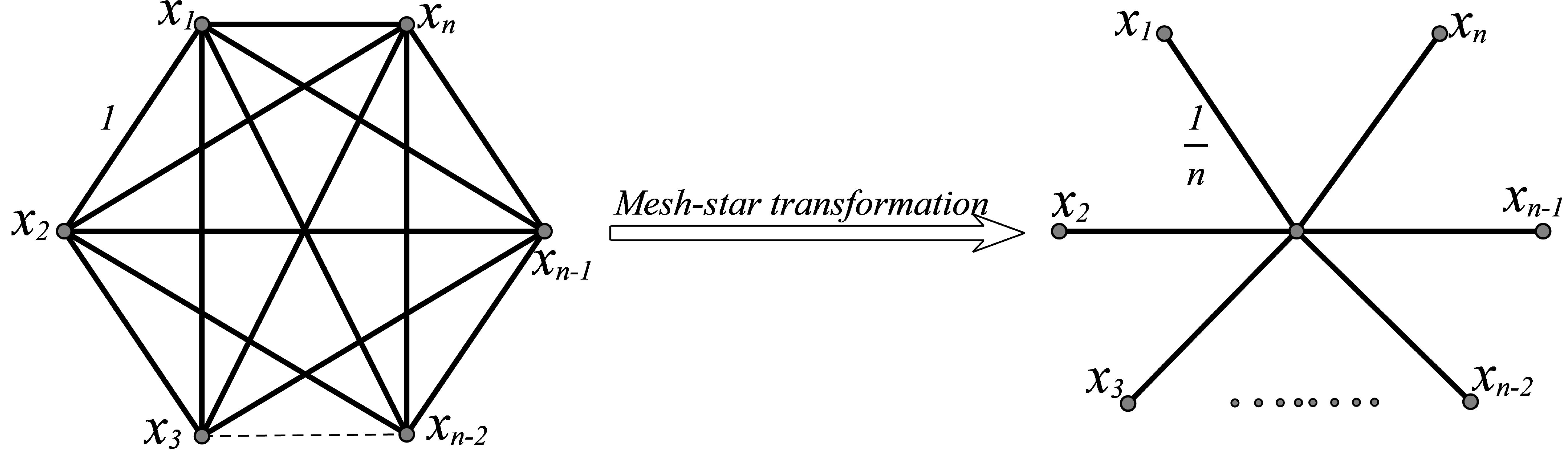}\\
\caption{The Mesh-star transformation.
}\label{fig02}
\end{figure}
\textbf{Mesh-star transformation} \cite{rose1924} generalizes the star-triangle transformation. For any electrical network $N$ containing a $K_n$ subnetwork (with all edges weight 1), we can replace $K_n$ with an star network $S_n$ (with all edges weight $\frac{1}{n}$) while preserving the network's electrical equivalence. The resulting network $N^*$ is equivalent to $N$ (see Figure \ref{fig02}).

Li and Tian introduced a key transformation in \cite{liti2022}, which we employ to prove our main results. They analyze a weighted graph \( G \) containing a complete bipartite subgraph \( Q = X \cup Y \), where \( X = \{x_i\}_{i=1}^m \), \( Y = \{y_j\}_{j=1}^n \), with uniform edge weights $\frac{1}{a_j}$ for all \((x_i, y_j)\) and $\sum\limits_{j=1}^n a_j = a \neq 0$. A weighted double star graph \( Q' \) (Figure \ref{fig03} (a)-(b)) is constructed as
\[
V(Q') = X \cup Y \cup \{x, y\}, \quad
E(Q') = \{(x, y)\} \cup \{(x, x_i)\}_{i=1}^m \cup \{(y, y_j)\}_{j=1}^n,
\]
where \( x \) and \( y \) are new vertices. Edge weights
\(\omega(x, y)=-\frac{1}{ma} \);
\(\omega(x_i, x)=\frac{1}{a} \);
\(\omega(y_j, y)=\frac{1}{ma_j} \).

\begin{figure}[H]
\centering
\includegraphics[width=0.95\textwidth]{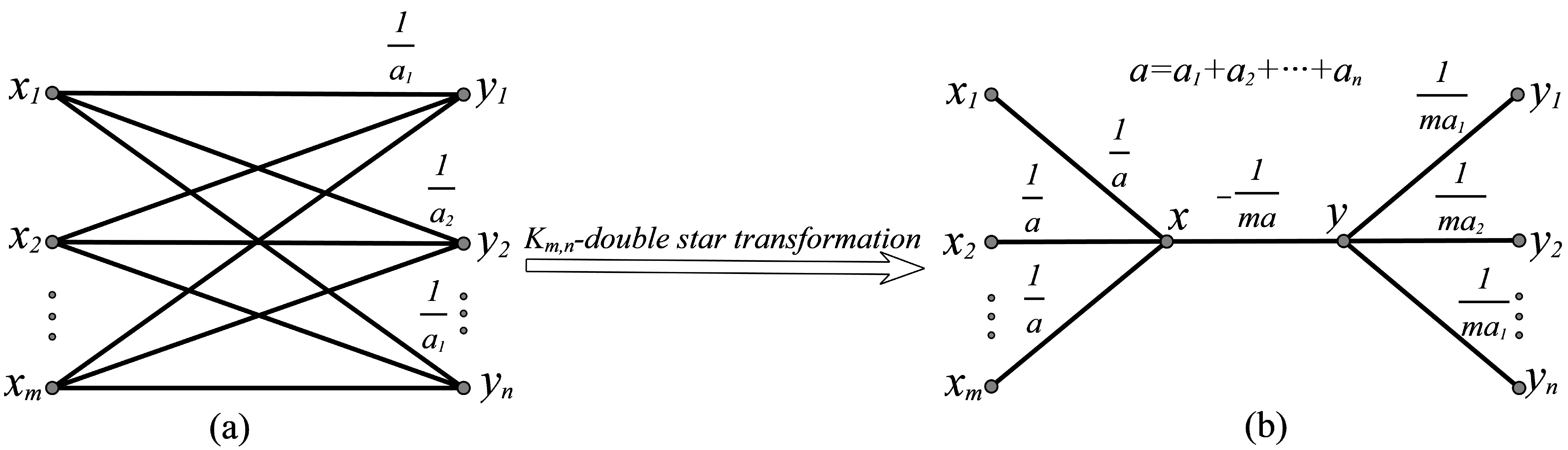}\\
\caption{The $K_{m,n}$-double star transformation with edge weight.
}\label{fig03}
\end{figure}

Replacing the complete bipartite graph \(Q\) in \(G\) with \(Q'\) yields a new graph \(G'\). This operation, called \textbf{the $K_{m,n}$-double star transformation with edge weight \cite{liti2022}}, leads to

\begin{lem}[Li and Tian \cite{liti2022}]\label{le05}
Let \(Q = (X, Y)\) and \(Q'\) be the edge weighted resistance networks (graphs) shown in Figure \ref{fig03} (a)-(b), where resistance $r_{e}=\frac{1}{\omega(e)}$ for $e\in E(Q)$ or $e\in E(Q')$.
The weights of edges are as shown Figure \ref{fig03}.
Then \(Q\) is  \(V(X \cup Y)\)-equivalent to \(Q'\).
\end{lem}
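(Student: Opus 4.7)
The plan is to verify $V(X\cup Y)$-equivalence directly by showing $R_{Q}(u,v)=R_{Q'}(u,v)$ for every pair $u,v\in X\cup Y$, using current-injection arguments in both $Q$ and $Q'$. By the symmetry of $Q$ that permutes the vertices of $X$ (the edge weight $1/a_j$ is uniform in $i$), it suffices to consider three representative cases: $(u,v)=(x_1,x_2)$, $(u,v)=(y_1,y_2)$, and $(u,v)=(x_1,y_1)$. For each case I will inject $+1$ ampere at $u$ and $-1$ ampere at $v$, solve Kirchhoff's current law for the vertex potentials, and show that the resulting potential drop $V(u)-V(v)$ is the same in $Q$ and in $Q'$.

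The key structural observation in $Q'$ is that each $x_i$ is a pendant neighbor of the auxiliary vertex $x$, so KCL at any $x_i$ with no external injection forces $V(x_i)=V(x)$; analogously $V(y_j)=V(y)$ whenever $y_j$ carries no injection. This collapse reduces each case to a small linear system involving only $V(x)$, $V(y)$, and the two injection vertices, together with KCL at $x$ and $y$ themselves, producing explicit closed-form expressions for $R_{Q'}(u,v)$. On the $Q$ side, the same uniformity in the $x_i$ index collapses the KCL equations at the unexcited $x_i$'s (and, in case (b), at the unexcited $y_j$'s), yielding comparably clean formulas for $R_Q(u,v)$.

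Cases (a) and (b) are the easier ones: in each, KCL at the auxiliary vertex opposite the injection side forces $V(x)=V(y)$, so the negative-weight edge $(x,y)$ carries no current and is effectively invisible, leaving a simple star to analyze. The main obstacle is case (c): injecting at $x_1$ and extracting at $y_1$ breaks both the $X$- and the $Y$-symmetries, so $V(x_1),V(y_1)$ decouple from the collapsed potentials and a genuine $4\times 4$ linear system in $V(x_1),V(y_1),V(x),V(y)$ must be solved. Here the precise value $\omega(x,y)=-\tfrac{1}{ma}$ is engineered to make the cross-term cancel correctly, and the bulk of the work is verifying the resulting algebraic identity. A conceptually cleaner alternative is to form the weighted Laplacian $L_{Q'}$ and compute its Schur complement with respect to the $2\times 2$ block indexed by $\{x,y\}$, showing it equals $L_Q$; this formalizes the elimination of the two auxiliary vertices but ultimately reduces to the same underlying computation.
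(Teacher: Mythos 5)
The paper offers no proof of Lemma \ref{le05}; it is imported verbatim from Li and Tian \cite{liti2022}, so there is no internal argument to compare yours against, and your plan must be judged on its own terms. It is sound: the three pair types $(x_i,x_{i'})$, $(y_j,y_{j'})$, $(x_i,y_j)$ exhaust $V(X\cup Y)$ up to relabelling, the pendant-vertex observation ($V(x_i)=V(x)$ at any unexcited $x_i$, and likewise on the $Y$ side) is exactly the right collapse, and the Schur-complement reformulation is the cleanest packaging: Kron reduction of $L_{Q'}$ onto $X\cup Y$ preserves all effective resistances, and since the effective-resistance matrix on a fixed vertex set determines the Laplacian, the lemma is equivalent to the single identity that the Schur complement of $L_{Q'}$ with respect to the $\{x,y\}$ block equals $L_Q$. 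Either version, carried out, gives a complete proof.

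Two corrections to the plan. First, you overestimate case (c): $Q'$ is a tree (a double star), so for any two vertices the unit injected current is forced onto the unique connecting path and $R_{Q'}(u,v)$ is simply the sum of the edge resistances along that path; in particular $R_{Q'}(x_1,y_1)=r_{x_1x}+r_{xy}+r_{yy_1}$, with no $4\times 4$ system and no delicate cancellation on the $Q'$ side. The only genuine linear algebra in case (c) is in $Q$, where the $X$-symmetry and the $j$-independence of the unexcited $y_j$-potentials still reduce KCL to two unknowns. Second, a warning before you compute: the equivalence holds only if the quantities $\frac{1}{a_j}$, $\frac{1}{a}$, $\frac{1}{ma_j}$, $-\frac{1}{ma}$ attached to the edges in Figure \ref{fig03} are read as \emph{resistances} (equivalently, conductances $a_j$, $a=\sum_j a_j$, $ma_j$, $-ma$). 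If you instead take them as conductances $\omega$ with $r_e=1/\omega(e)$, as the surrounding text literally states, already case (a) fails: one gets $R_Q(x_1,x_2)=2\bigl(\sum_j a_j^{-1}\bigr)^{-1}$ while $R_{Q'}(x_1,x_2)=2a$, and these agree only for $n=1$. Finally, since $\omega(x,y)<0$ the graph $Q'$ is not a physical resistor network, so ``effective resistance'' there must be understood via the Laplacian pseudoinverse, which is what your current-injection computation implicitly does.
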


\begin{figure}[H]
\centering
\includegraphics[width=0.95\textwidth]{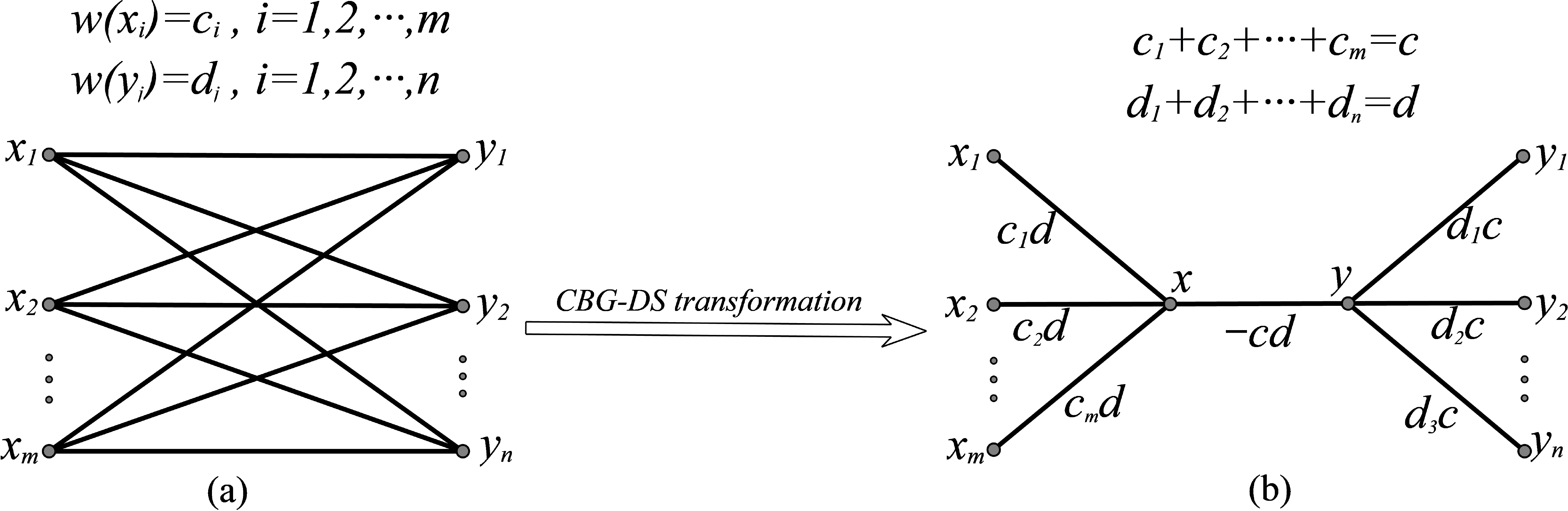}\\
\caption{The $K_{m,n}$-double star transformation with vertex weight.
}\label{fig04}
\end{figure}

Similarly, \textbf{the $K_{m,n}$-double star transformation with vertex weight} was proposed by Chen and Yan \cite{cwya2021}. We call this transformation as CBG-DS transformation.

\begin{lem}[Chen and Yan \cite{cwya2021}]\label{le06}
Let \(Q = (X, Y)\) and \(Q'\) be the vertex weighted resistance networks (graphs) shown in Figure \ref{fig04} (a)-(b). The weights of edges or vertices are as shown Figure \ref{fig04}. Then \(Q\) is  \(V(X \cup Y)\)-equivalent to \(Q'\).
\end{lem}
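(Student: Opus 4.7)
The plan is to establish the vertex-weight transformation by leveraging its edge-weight counterpart Lemma~\ref{le05}. A vertex-weighted graph canonically induces an edge-weighted graph via $\omega(uv) = \omega(u)\omega(v)$, so the vertex-weighted $K_{m,n}$ in Figure~\ref{fig04}(a) can be read as an edge-weighted $K_{m,n}$ with rank-one conductance $\omega(x_i)\omega(y_j)$ on edge $(x_i,y_j)$. Similarly, the vertex-weighted double star in Figure~\ref{fig04}(b) becomes an edge-weighted network whose spoke conductances $(x_i,x)$, $(y_j,y)$ and central conductance $(x,y)$ are products of the endpoint weights. The first step is therefore to rewrite both $Q$ and $Q'$ in their induced edge-weighted form, reducing the claim to an equivalence of two specific edge-weighted resistance networks on the same terminal set.

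Next, I would compute $R_Q(u,v)$ directly on the edge-weighted $K_{m,n}$ for all $u,v \in X\cup Y$. Because the conductance factors as $\omega(x_i)\omega(y_j)$, the Laplacian acting on the bipartition has a rank-one off-diagonal block, which allows one to solve for node voltages in closed form: injecting unit current at $u$ and extracting at $v$, Kirchhoff's current law yields expressions for $R_Q(x_i,x_{i'})$, $R_Q(y_j,y_{j'})$ and $R_Q(x_i,y_j)$ in terms of only the global sums $A = \sum_i \omega(x_i)$ and $B = \sum_j \omega(y_j)$ together with the individual endpoint weights. On the $Q'$ side, every $X$-to-$Y$ path traverses the edge $(x,y)$, so iterated series–parallel reductions around the two central vertices produce analogous closed-form expressions for the same pairs.

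Finally, I would equate the two sets of formulas pair-by-pair. The matching determines the required conductances (equivalently, the required vertex weights at $x$ and $y$) in $Q'$, and the identification then holds simultaneously for every pair in $X\cup Y$, which is exactly the definition of $V(X\cup Y)$-equivalence via Proposition~\ref{pro02}.

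The main obstacle is bookkeeping rather than any conceptual difficulty. With arbitrary vertex weights the effective resistances are rational functions of many parameters, and the cross-cancellations between the $K_{m,n}$ side and the double-star side are delicate. The key simplification is the rank-one factorization of the bipartite conductance, which collapses all sums into expressions involving only $A$ and $B$; once this structural feature is exploited, the verification is algebraically routine and parallels the proof of Lemma~\ref{le05} step by step.
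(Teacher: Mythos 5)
The paper does not actually prove this lemma; it is imported verbatim from Chen and Yan \cite{cwya2021}, so there is no in-paper argument to compare your proposal against. Judged on its own, your route is sound and essentially complete as a plan. The key structural point is right: with induced conductances $\omega(x_i)\omega(y_j)$, Kirchhoff's current law at an uninvolved $y_j$ forces $V_{y_j}=\frac{1}{A}\sum_i \omega(x_i)V_{x_i}$ (and symmetrically), so the resistances collapse to
$R_Q(x_i,x_{i'})=\tfrac{1}{B}\bigl(\tfrac{1}{\omega(x_i)}+\tfrac{1}{\omega(x_{i'})}\bigr)$,
$R_Q(y_j,y_{j'})=\tfrac{1}{A}\bigl(\tfrac{1}{\omega(y_j)}+\tfrac{1}{\omega(y_{j'})}\bigr)$ and
$R_Q(x_i,y_j)=\tfrac{1}{\omega(x_i)B}+\tfrac{1}{\omega(y_j)A}-\tfrac{1}{AB}$, where $A=\sum_i\omega(x_i)$ and $B=\sum_j\omega(y_j)$; since $Q'$ is a tree, its pairwise resistances are path sums, and all three families match once the spoke conductances are $\omega(x_i)B$ and $\omega(y_j)A$ and the central edge $(x,y)$ has conductance $-AB$, which is what Figure \ref{fig04} encodes. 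Two caveats. First, Lemma \ref{le05} is not literally applicable here: it assumes the weight of $(x_i,y_j)$ depends only on $j$, whereas the vertex-induced weight $\omega(x_i)\omega(y_j)$ varies on both sides, so Lemma \ref{le06} is not a special case of Lemma \ref{le05}; your plan survives because you use \ref{le05} only as a template and redo the computation, but "parallels step by step" should not be read as a reduction. Second, the central edge of $Q'$ carries negative conductance, so the series principle and the path-sum formula for trees are not the textbook positive-resistance statements; they must be justified at the level of the Laplacian (Schur complement over the internal vertices $x,y$), which is in effect what your KCL computation does. These are presentational rather than mathematical gaps.
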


\begin{lem}[Klein and Randi\'{c} \cite{klei1993}]\label{le32}
Consider an edge-weighted graph $G$ with a cut vertex $x$. If vertices $u$ and $v$ belong to distinct components of $G-x$, then $R_G(u, v)=R_G(u, x)+R_G(x, v)$.
\end{lem}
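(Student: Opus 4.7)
The plan is to prove Lemma \ref{le32} by an electrical-network argument combined with the Principle of Elimination (Proposition \ref{th01}). Interpret $G$ as a resistor network and inject a unit current at $u$ and extract it at $v$. Let $\phi:V(G)\to\mathbb{R}$ be a potential function realizing this current flow, so that by definition $R_G(u,v)=\phi(u)-\phi(v)$.

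Next I would exploit the cut-vertex structure. Let $C_u$ and $C_v$ denote the components of $G-x$ containing $u$ and $v$ respectively, and set $H_u=G[V(C_u)\cup\{x\}]$, $H_v=G[V(C_v)\cup\{x\}]$. Every edge of $G$ lies either entirely in some $H_w$ for a component $C_w$ of $G-x$, or is incident to $x$ inside such an $H_w$, so $G$ is the edge-disjoint union of the $H_w$'s glued at $x$. Since $u$ and $v$ sit in different components of $G-x$, the only way for current to travel from $u$ to $v$ is via $x$: by Kirchhoff's current law the net current entering $x$ from $H_u$ equals one, the net current leaving $x$ into $H_v$ equals one, and on every other subnetwork $H_w$ attached at $x$ the only "boundary" vertex is $x$ itself, forcing the current there to vanish (the zero flow is the unique Kirchhoff solution with no source or sink).

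Restricting $\phi$ and the current to $H_u$ therefore gives a valid Kirchhoff solution in $H_u$ with unit current injected at $u$ and extracted at $x$; uniqueness of such a solution (up to an additive constant on $\phi$) yields $\phi(u)-\phi(x)=R_{H_u}(u,x)$. The other blocks at $x$ can now be stripped off one at a time by Proposition \ref{th01}, since each of them is a block whose only cut vertex with the rest of $G$ is $x$, giving $R_{H_u}(u,x)=R_G(u,x)$. An identical argument on the $v$-side produces $\phi(x)-\phi(v)=R_G(x,v)$. Adding the two equalities and using $R_G(u,v)=\phi(u)-\phi(v)$ finishes the proof.

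The only subtle step is the uniqueness/decomposition argument in the middle paragraph: one must be careful that the currents on the blocks attached to $x$ but containing neither $u$ nor $v$ are genuinely zero, and that the restricted flow on $H_u$ really is the Kirchhoff solution for the $(u,x)$-problem in $H_u$. Both facts follow from the uniqueness of the potential (modulo constants) on each connected subnetwork with prescribed boundary currents, but this is the point that deserves the most careful justification; everything else reduces to an invocation of Proposition \ref{th01}.
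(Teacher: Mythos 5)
The paper offers no proof of this lemma at all: it is quoted verbatim as a classical result of Klein and Randi\'{c} \cite{klei1993}, so there is nothing internal to compare your argument against. Your proof is a correct and essentially standard potential-theoretic derivation. The chain of reasoning is sound: summing the divergence of the unit $u$--$v$ current over each component $C_w$ of $G-x$ shows that the net current crossing into $x$ is $+1$ from $C_u$, $-1$ into $C_v$, and $0$ for every other component; on a sourceless piece a flow that is simultaneously divergence-free and a gradient of $\phi$ has zero energy and hence vanishes; and the restriction of $(\phi,i)$ to $H_u$ is then the unique Kirchhoff solution for the $(u,x)$-problem there, giving $\phi(u)-\phi(x)=R_{H_u}(u,x)$, and symmetrically on the $v$-side. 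Two small points deserve tightening. First, the subgraphs $H_w=G[V(C_w)\cup\{x\}]$ for $w\neq u$ need not be single blocks, so invoking Proposition \ref{th01} to pass from $R_{H_u}(u,x)$ to $R_G(u,x)$ requires stripping leaf blocks of the block--cut tree iteratively rather than ``one block per component''; this is routine but should be said. Second, you can avoid Proposition \ref{th01} entirely (and make the proof self-contained) by applying your own current-decomposition argument once more to the unit $(u,x)$-flow in $G$: that flow is supported on $H_u$, whence $R_G(u,x)=R_{H_u}(u,x)$ directly. With either repair the argument is complete.
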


\section{Enumeration of spanning trees of generalized blow-up graphs}
In this section, by using the relationship between the Laplacian eigenvalue and the number of spanning trees, we determine the number of spanning trees in generalized blow-up graphs $H_{p_1, p_2, \cdots, p_k}^{q_1, q_2, \cdots, q_k}$.

\begin{lem}[Biggs \cite{bigg1974}]\label{le21}
Let $G$ be a simple graph with $n$ vertices and its Laplacian eigenvalues  $0,\lambda_{1},\lambda_{2},\cdots,\lambda_{n-1}$.
Recall that $\tau(G)$ is the sum of weight of all spanning tree of $G$.
Then

{\rm (i)} The Laplacian eigenvalues of $\overline{G}$ are $0,n-\lambda_{1},n-\lambda_{2},\cdots,n-\lambda_{n-1}$.

{\rm (ii)} $\tau(G)=\frac{\lambda_{1}\lambda_{2}\cdots \lambda_{n-1}}{n}$.
\end{lem}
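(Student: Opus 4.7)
The plan is to handle the two parts separately. Part (i) follows from the matrix identity $L(G)+L(\overline{G})=nI-J$, where $J$ is the all-ones matrix; part (ii) is a direct application of Kirchhoff's Matrix-Tree Theorem combined with a short linear-algebra argument on the characteristic polynomial.

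For part (i), I would first verify $L(G)+L(\overline{G})=nI-J$ entrywise: on the diagonal, $d_G(v)+d_{\overline{G}}(v)=n-1$, matching the diagonal entry $n-1$ of $nI-J$; off the diagonal, exactly one of $uv\in E(G)$ or $uv\in E(\overline{G})$ holds, so the corresponding entries of $-L(G)$ and $-L(\overline{G})$ contribute $-1$ in total. Next, since $L(G)$ is real symmetric, it admits an orthonormal eigenbasis containing the all-ones vector $\mathbf{1}$, which lies in the kernel of both $L(G)$ and $L(\overline{G})$ and accounts for the shared eigenvalue $0$. For any other eigenvector $v$ with $L(G)v=\lambda_i v$, orthogonality to $\mathbf{1}$ forces $Jv=0$, so
\[
L(\overline{G})v=(nI-J-L(G))v=(n-\lambda_i)v,
\]
yielding the claimed spectrum of $\overline{G}$.

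For part (ii), I would invoke the Matrix-Tree Theorem in the form $\tau(G)=\det(L(G)_{i,i})$ for any principal $(n-1)\times(n-1)$ submatrix. Writing the characteristic polynomial of $L(G)$ as $t(t-\lambda_1)\cdots(t-\lambda_{n-1})$, the coefficient of $t$ equals $(-1)^{n-1}\lambda_1\cdots\lambda_{n-1}$; by the standard expansion of the characteristic polynomial in terms of principal minors, this coefficient also equals $(-1)^{n-1}$ times the sum of all $(n-1)\times(n-1)$ principal minors of $L(G)$. Combining the two expressions for this coefficient with the equality of cofactors gives $\tau(G)=\frac{1}{n}\lambda_1\cdots\lambda_{n-1}$.

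Both parts are classical facts of spectral graph theory, so no serious obstacle arises. The one subtle point worth flagging is the step in (ii) establishing that \emph{all} principal $(n-1)$-cofactors of $L(G)$ are equal; this relies on the vanishing of the row and column sums of $L(G)$, and it is this symmetry that converts the coefficient of $t$ in the characteristic polynomial into $n$ times a single cofactor, producing the factor of $1/n$ in the final formula.
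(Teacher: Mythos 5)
Your proof is correct: part (i) via $L(G)+L(\overline{G})=nI-J$ and the eigenbasis orthogonal to $\mathbf{1}$, and part (ii) via the Matrix--Tree Theorem together with the coefficient of $t$ in the characteristic polynomial and the equality of all principal cofactors of $L(G)$, is exactly the standard argument. The paper itself offers no proof of this lemma---it is quoted from Biggs as a known result---so there is nothing to compare against; your write-up, including the flagged subtlety that the vanishing row and column sums force all $(n-1)\times(n-1)$ cofactors to coincide, is complete and would serve as a proof of the cited fact.
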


\begin{thm}\label{th31}
Let $G=H_{p_1, p_2, \cdots, p_k}^{q_1, q_2, \cdots, q_k}$, $n_{i}=tp_{i}+q_{i}$$(i=1,2,\cdots,k)$ and $n=\sum\limits_{i=1}^{k}n_{i}$. Then
$$\tau(G)=n^{k-2}\prod\limits_{i=1}^{k}(n-n_{i})^{p_{i}+q_{i}-1}(n-n_{i}+t)^{p_{i}(t-1)}.$$
\end{thm}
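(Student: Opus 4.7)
The plan is to invoke the spectral form of the Matrix--Tree Theorem (Lemma~\ref{le21}(ii)) and compute the complete Laplacian spectrum of $G$. Because the blow-up structure produces many twin vertices with identical external neighborhoods, $\mathbb{R}^n$ admits a natural orthogonal decomposition into three $L(G)$-invariant subspaces on which the eigenvalues can be read off directly.

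Let $V_1$ be spanned by all vectors supported on a single $K_t$-component of some $G_i$ that sum to zero on that component; let $V_2$ consist of vectors zero outside some $G_i$, constant on each component of $G_i$, and whose weighted sum $\sum_C |C|\, c_C$ over the components of $G_i$ vanishes; and let $V_3$ be the $k$-dimensional subspace of vectors constant on each $G_i$. Since the $t$ vertices of any $K_t$ share identical external neighborhoods, the external entries of $L(G)x$ vanish for $x \in V_1$, and a direct calculation gives $L(G)x = (n-n_i+t)\,x$, producing eigenvalue $n-n_i+t$ with multiplicity $p_i(t-1)$ for each $i$. On $V_2$ the weighted zero-sum condition kills the external entries of $L(G)x$, and a short calculation that handles $K_t$-components and isolated components in parallel gives $L(G)x = (n-n_i)\,x$, yielding eigenvalue $n-n_i$ with multiplicity $p_i+q_i-1$ for each $i$. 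Parametrizing $V_3$ by $\alpha = (\alpha_1,\ldots,\alpha_k)^T$, the restriction of $L(G)$ to $V_3$ is the $k \times k$ matrix $M = nI - \mathbf{1}\mathbf{n}^T$ with $\mathbf{n} = (n_1,\ldots,n_k)^T$; since $\mathbf{1}\mathbf{n}^T$ has rank one with eigenvalues $n$ and $0$ of multiplicity $k-1$, $M$ contributes eigenvalue $0$ once (the kernel of $L(G)$) and eigenvalue $n$ with multiplicity $k-1$.

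The multiplicities sum to $\sum_i p_i(t-1) + \sum_i (p_i+q_i-1) + k = n$, accounting for the full spectrum, so substituting the nonzero eigenvalues into $\tau(G) = \tfrac{1}{n} \prod \lambda_i$ yields exactly the claimed product. The main obstacle will be the careful verification that $V_2$ is $L(G)$-invariant, since the weighted zero-sum condition is precisely what forces the external rows of $L(G)x$ to vanish; once this and the identification of $M$ on $V_3$ are secured, the remainder is elementary bookkeeping, exploiting only that within each $G_i$ the internal degree of a vertex is $t-1$ in a $K_t$-component and $0$ for an isolated vertex, while the external neighborhood depends solely on the block index $i$.
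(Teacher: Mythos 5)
Your proof is correct and arrives at exactly the same Laplacian spectrum as the paper, but by a genuinely different route. The paper never works with $L(G)$ directly: it passes to the complement, observes that $\overline{G}=\bigcup_{i}(K_{n_i}-p_iK_t)$, obtains the spectrum of each piece by complementing $p_iK_t\cup q_iK_1$ inside $K_{n_i}$ via Lemma~\ref{le21}(i), takes the union over components, and complements once more to recover the spectrum of $G$, after which Lemma~\ref{le21}(ii) finishes. You instead construct the spectrum from scratch through the orthogonal $L(G)$-invariant decomposition $V_1\oplus V_2\oplus V_3$ (an equitable-partition/twin-vertex argument); your dimension count, the identification of $M=nI-\mathbf{1}\mathbf{n}^{T}$ on $V_3$, and the three eigenvalue computations all check out. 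The complement trick is shorter and needs no explicit eigenvectors; your argument is more self-contained (it does not use Lemma~\ref{le21}(i)) and makes completely transparent where each eigenvalue and multiplicity comes from.

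One caveat, which applies equally to the paper's proof and to yours: the stated formula cannot hold for arbitrary $H$, since it depends on $H$ only through $k$, yet $\tau(G)=0$ whenever $H$ is disconnected. The theorem implicitly assumes $H=K_k$. The paper uses this when it writes $\overline{G}=\bigcup_i\overline{G_i}$ (true only when $\overline{H}$ is edgeless); you use it when you take the external degree of every vertex of $G_i$ to be $n-n_i$, which enters the eigenvalues $n-n_i+t$ and $n-n_i$ and the matrix $M=nI-\mathbf{1}\mathbf{n}^{T}$. For general $H$ the external degree is $\sum_{v_j\in N_H(v_i)}n_j$ and the $V_3$ block becomes a weighted Laplacian of $H$ rather than a rank-one perturbation of $nI$. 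Your proof is therefore as valid as the paper's, but you should state the hypothesis $H=K_k$ explicitly.
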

\begin{proof}
Let $G=H_{p_1, p_2, \cdots, p_k}^{q_1, q_2, \cdots, q_k}$.
Then $\overline{G}=\bigcup\limits_{i=1}^{k}\overline{G_{i}}=\bigcup\limits_{i=1}^{k}(K_{n_{i}}-p_{i}K_{t})$,
where $\overline{K_{n_{i}}-p_{i}K_{t}}=p_{i}K_{t}\bigcup q_{i}K_{1}$.
For $t\geq 2$, the Laplacian eigenvalues of $p_{i}K_{t}\bigcup q_{i}K_{1}$ are $\{\underbrace{0,0,\cdots,0}_{p_{i}+q_{i}},\underbrace{t,t,\cdots,t}_{p_{i}(t-1)}\}$.
By Lemma \ref{le21}, Laplacian eigenvalues of $K_{n_{i}}-p_{i}K_{t}$ are $\{0,\underbrace{n_{i},n_{i},\cdots,n_{i}}_{p_{i}+q_{i}-1},\underbrace{n_{i}-t,n_{i}-t,\cdots,n_{i}-t}_{p_{i}(t-1)}\}$.
Then the Laplacian eigenvalues of $\overline{G}$ are $\{\bigcup\limits_{i=1}^{k}\{0,\underbrace{n_{i},n_{i},\cdots,n_{i}}
_{p_{i}+q_{i}-1},\underbrace{n_{i}-t,n_{i}-t,\cdots,n_{i}-t}_{p_{i}(t-1)}\}\}$.
By Lemma \ref{le21}, the Laplacian eigenvalues of $G$ are
$\{\{0,\underbrace{n,n,\cdots,n}_{k-1}\}\cup  \bigcup\limits_{i=1}^{k}\{\underbrace{n-n_{i},n-n_{i},\cdots,n-n_{i}}
_{p_{i}+q_{i}-1},$ $\underbrace{n-n_{i}-t,n-n_{i}-t,\cdots,n-n_{i}-t}_{p_{i}(t-1)}\}\}$.
Thus by (ii) of Lemma \ref{le21}, $\tau(G)=n^{k-2}\prod\limits_{i=1}^{k}(n-n_{i})^{p_{i}+q_{i}-1}(n-n_{i}+t)^{p_{i}(t-1)}$.
\end{proof}

Let $p_{1}=p_{2}=\cdots=p_{k}=0$. Then by Theorem \ref{th31}, we have
\begin{cor}[Biggs \cite{bigg1974}]\label{cor21}
Let $G=K_{q_{1},q_{2},\cdots,q_{k}}$ be a complete t-partite graph. Then
$$\tau(G)=n^{k-2}\prod\limits_{i=1}^{k}(n-n_{i})^{q_{i}-1}.$$
\end{cor}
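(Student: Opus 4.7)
The plan is to compute $\tau(G)$ via the Matrix-Tree identity in Lemma 2.1 (ii), by first obtaining the full Laplacian spectrum of $G$. The route I would take is to pass through the complement $\overline{G}$ twice: while $G$ itself is a complicated join, the non-edges of $G$ consist exactly of the non-edges inside each $G_i$ (since $H = K_k$ in this setup, so every pair $G_i, G_j$ is completely joined), and hence $\overline{G}$ is the disjoint union $\bigsqcup_{i=1}^{k} \overline{G_i}$ of much simpler pieces.

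First I would read off the Laplacian spectrum of each block $G_i = p_i K_t \cup q_i K_1$. Since one copy of $K_t$ contributes $\{0,\, t^{(t-1)}\}$ and each isolated vertex contributes one zero, $G_i$ has spectrum
\[
\{0^{(p_i+q_i)},\, t^{(p_i(t-1))}\},
\]
a multiset of size $n_i = p_i t + q_i$. Applying Lemma 2.1 (i) to $G_i$ sends the $p_i(t-1)$ copies of $t$ to $n_i - t$ and all but one of the $p_i + q_i$ zeros to $n_i$, producing the spectrum $\{0,\, n_i^{(p_i+q_i-1)},\, (n_i - t)^{(p_i(t-1))}\}$ of $\overline{G_i}$. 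Since Laplacian spectra add over connected components, the spectrum of $\overline{G}$ is simply the pooled multiset of these.

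A second application of Lemma 2.1 (i), now with the total vertex count $n$, inverts back to $G$: one canonical zero is preserved, each of the remaining $k - 1$ zeros of $\overline{G}$ becomes $n$, each $n_i$ becomes $n - n_i$, and each $n_i - t$ becomes $n - n_i + t$. Feeding this into Lemma 2.1 (ii) gives
\[
\tau(G) = \frac{1}{n}\cdot n^{k-1}\cdot \prod_{i=1}^{k} (n - n_i)^{p_i + q_i - 1}(n - n_i + t)^{p_i(t-1)},
\]
which simplifies to the claimed identity. The computation is essentially bookkeeping once the decomposition $\overline{G} = \bigsqcup_{i=1}^k \overline{G_i}$ is in hand; the only step where I expect any care is tracking exactly which zero is the ``canonical'' one at each stage, so that the exponent of $n$ ends up as $k-2$ rather than $k-1$ or $k-3$, and verifying that the pooled formula for the spectrum of $\overline{G}$ absorbs correctly the occasional coincidence $n_i - t = 0$ (arising when $p_i = 1$, $q_i = 0$) without upsetting the multiplicity count.
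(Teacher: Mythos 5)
Your argument is correct and is essentially the paper's own: the paper proves this corollary by simply setting $p_1=\cdots=p_k=0$ in Theorem~3.1, and the double-complement Laplacian-spectrum computation you carry out (pass to $\overline{G}=\bigsqcup_i\overline{G_i}$, apply Lemma~\ref{le21}(i) twice, then Lemma~\ref{le21}(ii)) is exactly the paper's proof of that theorem. Your bookkeeping of the zeros and multiplicities checks out, so the only difference is that you inline the general computation rather than citing the already-proved general formula and specializing.
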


Let $k=2$. Then by Theorem \ref{th31}, we have
\begin{cor}[Ge \cite{geju2021}]\label{cor22}
Let $G=H_{p_1, p_2}^{q_1, q_2}$, $n_{i}=2p_{i}+q_{i}$$(i=1,2)$ and $n=\sum\limits_{i=1}^{2}n_{i}$. Then $$\tau(G)=(n-n_{1})^{p_{1}+q_{1}-1}(n-n_{1}+2)^{p_{1}}
(n-n_{2})^{p_{2}+q_{2}-1}(n-n_{2}+2)^{p_{2}}.$$
\end{cor}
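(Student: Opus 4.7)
The plan is to compute $\tau(G)$ via Kirchhoff's matrix-tree theorem in the form given by Lemma~2.1(ii), which expresses it as the product of the nonzero Laplacian eigenvalues of $G$ divided by $n$. Since $G$ is dense but its complement is very simple, I would access the Laplacian spectrum of $G$ by applying the complement identity of Lemma~2.1(i) twice: once locally to pass from $G_i = p_iK_t\cup q_iK_1$ to $\overline{G_i}=K_{n_i}-p_iK_t$, and once globally to pass from $\overline{G}$ to $G$. This reduces everything to the elementary Laplacian spectrum of a disjoint union of cliques and isolated vertices.

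Concretely, I would first record that $p_iK_t\cup q_iK_1$ is disconnected with $p_i+q_i$ components, each $K_t$ contributing the eigenvalue $t$ with multiplicity $t-1$ together with one zero and each isolated vertex contributing one zero; its Laplacian spectrum is therefore $\{0^{p_i+q_i},\, t^{p_i(t-1)}\}$. A first application of Lemma~2.1(i) at scale $n_i$ yields the spectrum of $\overline{G_i}$ as $\{0,\, n_i^{p_i+q_i-1},\, (n_i-t)^{p_i(t-1)}\}$. Since $H$ is complete on its $k$ vertices (which is what the formula and the argument tacitly require), the blow-up construction makes $\overline{G}$ the disjoint union $\bigcup_i \overline{G_i}$, so the spectrum of $\overline{G}$ is the multiset union of these lists and contains exactly $k$ zeros. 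A second application of Lemma~2.1(i), now at the global scale $n$, collapses those $k$ zeros into one zero together with $k-1$ copies of $n$, and shifts each $n_i$ to $n-n_i$ and each $n_i-t$ to $n-n_i+t$. Feeding the resulting nonzero list into Lemma~2.1(ii) and absorbing one factor of $n$ into $n^{k-1}$ produces $n^{k-2}\prod_{i=1}^{k}(n-n_i)^{p_i+q_i-1}(n-n_i+t)^{p_i(t-1)}$, as claimed.

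There is no deep obstacle here, but the multiplicity bookkeeping at each of the two complement steps is the one place where a slip is most plausible, and it is the step I would double-check most carefully. The central identity is $(p_i+q_i-1)+p_i(t-1)=n_i-1$, which guarantees that $\overline{G_i}$ contributes exactly $n_i-1$ nonzero eigenvalues; summing across $i$ gives $n-k$ nonzero eigenvalues of $\overline{G}$, which is precisely what is needed so that the subsequent global complement produces the correct count $n-1$ of nonzero eigenvalues of $G$. The degenerate case $t=1$ also deserves a brief check: the second block of multiplicities collapses to zero and the argument then recovers the complete multipartite count, which serves as a useful consistency test against Corollary~3.1.
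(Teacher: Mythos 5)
Your proposal is correct and follows essentially the same route as the paper: the paper obtains this corollary by setting $k=2$ (and $t=2$) in Theorem~3.1, whose proof is exactly your double application of Lemma~2.1 — complementing each $G_i=p_iK_t\cup q_iK_1$ locally, taking the disjoint union for $\overline{G}$, and complementing globally before applying the matrix-tree formula. Your multiplicity bookkeeping and your observation that the argument tacitly uses $H$ complete (automatic here, since $H=K_2$ is forced for $k=2$) are both accurate.
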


Let $t=2$. Then by Theorem \ref{th31}, we have
\begin{cor}[Cheng, Chen and Yan \cite{chya2022}]\label{cor23}
Let $G=H_{p_1, p_2, \cdots, p_k}^{q_1, q_2, \cdots, q_k}$, $n_{i}=2p_{i}+q_{i}$$(i=1,2,\cdots,k)$ and $n=\sum\limits_{i=1}^{k}n_{i}$. Then
$$\tau(G)=n^{k-2}\prod\limits_{i=1}^{k}(n-n_{i})^{p_{i}+q_{i}-1}(n-n_{i}+2)^{p_{i}}.$$
\end{cor}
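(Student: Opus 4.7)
The plan is to obtain $\tau(G)$ from the Laplacian spectrum of $G$ via Lemma~\ref{le21}(ii), and to obtain that spectrum by computing the spectrum of each component of $\overline{G}$ and then applying the complement formula in Lemma~\ref{le21}(i). In the generalized blow-up at hand, the complement $\overline{G}$ decomposes as the vertex-disjoint union $\bigcup_{i=1}^{k}\overline{G_i}$, since the all-to-all join between distinct blobs in $G$ leaves no edge between them in $\overline{G}$. This reduces the task to handling one blob at a time.

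For each blob, $G_i = p_i K_t \cup q_i K_1$ has $p_i + q_i$ connected components, so its Laplacian has eigenvalue $0$ with multiplicity $p_i + q_i$; the remaining eigenvalues come from the $p_i$ copies of $K_t$ and give $t$ with multiplicity $p_i(t-1)$. Viewing $G_i$ as a graph on $n_i = tp_i + q_i$ vertices and applying Lemma~\ref{le21}(i), the Laplacian spectrum of $\overline{G_i}$ becomes
$$\{0,\ \underbrace{n_i,\ldots,n_i}_{p_i+q_i-1},\ \underbrace{n_i-t,\ldots,n_i-t}_{p_i(t-1)}\}.$$

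Concatenating these spectra over $i=1,\ldots,k$ produces the Laplacian spectrum of $\overline{G}$, which has $k$ zero eigenvalues in total. A second application of Lemma~\ref{le21}(i), now to $\overline{G}$ on $n = \sum_{i=1}^{k} n_i$ vertices, preserves a single $0$, sends the remaining $k-1$ zeros to $n$, and sends each $n_i$ to $n-n_i$ and each $n_i-t$ to $n-n_i+t$. Feeding the resulting $n-1$ nonzero eigenvalues of $G$ into Lemma~\ref{le21}(ii) yields
$$\tau(G)=\frac{1}{n}\cdot n^{k-1}\prod_{i=1}^{k}(n-n_i)^{p_i+q_i-1}(n-n_i+t)^{p_i(t-1)},$$
which collapses to the stated formula.

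The only real obstacle is careful bookkeeping of multiplicities: one must track that each of the $k$ components of $\overline{G}$ contributes exactly one $0$ to the joint spectrum (so that $\overline{G}$ carries $k$ zeros while the connected graph $G$ carries only one), and one must apply the complement step in Lemma~\ref{le21}(i) with the correct ambient vertex count $n$ rather than any $n_i$ when passing from $\overline{G}$ back to $G$. Once these multiplicities are pinned down, the remainder is a routine product simplification.
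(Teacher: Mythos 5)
Your proof is correct and follows essentially the same route as the paper: the paper obtains this corollary by setting $t=2$ in Theorem~3.1, whose proof is exactly your argument — compute the Laplacian spectrum of each $G_i=p_iK_t\cup q_iK_1$, pass to $\overline{G_i}=K_{n_i}-p_iK_t$ and then to $\overline{G}=\bigcup_i\overline{G_i}$, complement once more on $n$ vertices, and apply the matrix-tree formula $\tau(G)=\tfrac{1}{n}\prod\lambda_j$. Your multiplicity bookkeeping ($k$ zeros in $\overline{G}$, of which $k-1$ become $n$) matches the paper's, and like the paper you implicitly take $H$ complete so that $\overline{G}$ is the disjoint union of the $\overline{G_i}$.
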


\section{The resistance distances and Kirchhoff indices of generalized blow-up graphs}

In this section, we determine the resistance distances between any two distinct vertices and Kirchhoff indices in generalized blow-up graphs.
Recall $H[n_{1},n_{2},\cdots,n_{k}]$ is the blow-up graph of $H$ with $G_{i}=n_{i}K_{1}$ for $i=1,2,\cdots,k$.

Let $H$ be a graph with vertex set $V(H) = \{v_1, v_2, \ldots, v_k\}$. The blow-up graph $H[n_1,n_2,\ldots,n_k]$ is characterized by the vertex partition $V(H[n_1,n_2,\ldots,n_k]) = V_1 \cup V_2 \cup \cdots \cup V_k$ where $|V_i| = n_i$ and $V_i = \{v_i^1, v_i^2, \ldots, v_i^{n_i}\}$ for each $1 \leq i \leq k$. Let $H\star(n_1,n_2,\ldots,n_k)$ (see Figure \ref{fig4})\cite{xuxu2025} be the graph constructed from $H$ by attaching star graphs $\{S_{n_i+1}\}_{i=1}^k$ through their central vertices $s_i$, where each star $S_{n_i+1}$ has vertex set $\{s_i\} \cup V_i$ and its center $s_i$ is connected to $v_i \in V(H)$. The edge-weighted graph $H\star(n_1,n_2,\ldots,n_k)^{\omega}$ \cite{xuxu2025} (see Figure \ref{fig4}) is the graph $H\star(n_1,n_2,\ldots,n_k)$ with weight function $\omega(s_iv_i^{t_i}) = \sum\limits_{v_j \in N_H(v_i)}n_j$ and $\omega(s_iv_i) = -n_i\sum\limits_{v_j \in N_H(v_i)}n_j$ for all $1 \leq i \leq k$ and $1 \leq t_i \leq n_i$, while $\omega(v_iv_j) = n_in_j$ whenever $v_iv_j \in E(H)$.

\begin{lem}[Xu and Xu \cite{xuxu2025}]\label{le31}
The blow-up graph $H[n_1,n_2,\cdots,n_k]$ is $V(H[n_1,n_2,\cdots,n_k])$-equivalent to the edge weighted graph $H\star(n_1,n_2,\cdots,n_k)^{\omega}$ (see Figures \ref{fig1},\ref{fig3},\ref{fig4} for an example, which shown a complete progression from $H$ to $H[n_{1},n_{2},\cdots,n_{k}]$ and finally to $H\star(n_1,n_2,\cdots,n_k)^{\omega}$).
\end{lem}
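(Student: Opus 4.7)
The plan is to derive Lemma~\ref{le31} by iterated application of Lemma~\ref{le05}, performing one $K_{m,n}$-double star transformation for each vertex of $H$. I would fix any ordering $v_1,v_2,\ldots,v_k$ of $V(H)$, set $G_0 = H[n_1,n_2,\ldots,n_k]$, and for $i=1,2,\ldots,k$ form $G_i$ from $G_{i-1}$ by a single application of Lemma~\ref{le05} with $X=V_i$. The two new vertices created at step $i$ are identified with $s_i$ (playing the role of the $X$-side star vertex) and $v_i$ (playing the role of the $Y$-side star vertex) of $H\star(n_1,\ldots,n_k)^{\omega}$. Because each application of Lemma~\ref{le05} is a $V(X\cup Y)$-equivalence, chaining all $k$ equivalences yields the desired $V(H[n_1,\ldots,n_k])$-equivalence between $G_0$ and $G_k$.

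The heart of the argument is an inductive structural claim about $G_{i-1}$: every edge incident to $V_i$ goes from $V_i$ to the set
$$Y_i \;=\; \Bigl(\bigcup_{j\sim i,\, j>i} V_j \Bigr) \;\cup\; \{\,v_j \,:\, j\sim i,\; j<i\,\},$$
forming a complete bipartite subgraph in which each $v_i^t$ is joined to every vertex of $V_j$ (for $j>i$, $j\sim i$) by an edge of weight $1$ and to every previously-created hub $v_j$ (for $j<i$, $j\sim i$) by an edge of weight $1/n_j$. This is exactly the row-uniform bipartite hypothesis of Lemma~\ref{le05}, with $m=n_i$, $a_{v_j^s}=1$ for $v_j^s\in V_j$ and $a_{v_j}=n_j$ for the hubs, so that $a=\sum_{j\sim i} n_j = N_i$ independently of the chosen ordering.

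Applying Lemma~\ref{le05} at step $i$ then produces star edges $v_i^t s_i$ of weight $1/N_i$, the bridge edge $s_iv_i$ of weight $-1/(n_iN_i)$, edges $v_j^s v_i$ of weight $1/n_i$ for $j>i$ with $j\sim i$, and edges $v_jv_i$ of weight $1/(n_in_j)$ for $j<i$ with $j\sim i$. In particular, for each $v_iv_j\in E(H)$ the hub-to-hub edge is created at whichever of the two steps is performed later and in either case carries weight $1/(n_in_j)$. After step $k$ the graph consists of the leaves $\bigcup_i V_i$, the stars $s_i$ with leaves $V_i$, the bridges $s_iv_i$, and a copy of $H$ on the hubs $\{v_i\}_{i=1}^k$, with the weights prescribed in $H\star(n_1,\ldots,n_k)^{\omega}$ (reading the ``$\omega$''-values there as resistances), completing the chain of equivalences.

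The main obstacle is verifying the inductive structural claim itself: one must check that step $j$ removes every edge incident to $V_j$ while attaching the new hub $v_j$ only to its former $Y_j$-neighbours, so that at a later step $i>j$ with $j\sim i$ the parameter $a_{v_j}$ is precisely $n_j$ and no unintended hub-to-hub edge $v_jv_\ell$ (with $v_jv_\ell\notin E(H)$) is ever created. This follows directly from the form of $Q'$ in Lemma~\ref{le05}, but it requires careful bookkeeping of what $Y_i$ and its weights look like at every stage, and is where the induction does its real work.
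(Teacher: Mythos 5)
Your argument is correct, and it takes a genuinely different route from the paper, which in fact offers no proof of Lemma~\ref{le31} at all: the result is imported from \cite{xuxu2025}, and the accompanying Figures~\ref{fig3}--\ref{fig4} sketch a per-edge strategy (one double star $s_i'\!-\!o_{ij}\!-\!o_{ji}\!-\!s_j'$ for each $v_iv_j\in E(H)$, followed by several consolidation steps merging the $\deg_H(v_i)$ stars at each class into one). Your per-vertex induction needs only $k$ applications of Lemma~\ref{le05} and no consolidation, and the bookkeeping you flag as the main obstacle does check out: at step $i$ every edge at $V_i$ goes into $Y_i$, the values depend only on the $Y$-endpoint ($a_y=1$ for $y$ in an unprocessed class, $a_{v_j}=n_j$ for a hub created at step $j<i$, since that edge was created there with value $\tfrac{1}{n_j\cdot 1}$), $a=\sum_{v_a\in N_H(v_i)}n_a$ independently of the chosen ordering, each hub-to-hub edge arises exactly once (at step $\max(i,j)$) and exactly when $v_iv_j\in E(H)$, and the resulting values $\tfrac{1}{a}$, $-\tfrac{1}{n_i a}$, $\tfrac{1}{n_in_j}$ are precisely the resistances $r_{s_iv_i^{o_i}}$, $r_{s_iv_i}$, $r_{v_iv_j}$ that the paper assigns to $H\star(n_1,\ldots,n_k)^{\omega}$ in the proof of Theorem~4.1. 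The only point to repair is the parenthetical ``reading the $\omega$-values there as resistances'': in the paper's own later usage the $\omega$-values of $H\star(n_1,\ldots,n_k)^{\omega}$ are conductances (there $r_{s_iv_i^{o_i}}=1/\sum_{v_a\in N_H(v_i)}n_a$), and the underlying confusion is that the displayed quantities in Lemma~\ref{le05} must themselves be read as resistances rather than as weights $\omega$ with $r_e=1/\omega(e)$ --- otherwise that lemma already fails for $K_{2,2}$ with unit weights, where $R(x_1,x_2)=1$ forces $\tfrac{1}{a}=\tfrac{1}{2}$ to be a resistance. With that convention fixed, the values you compute match $H\star(n_1,\ldots,n_k)^{\omega}$ literally and the parenthetical can simply be dropped.
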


\begin{figure}[H]
\centering
\includegraphics[width=0.95\textwidth]{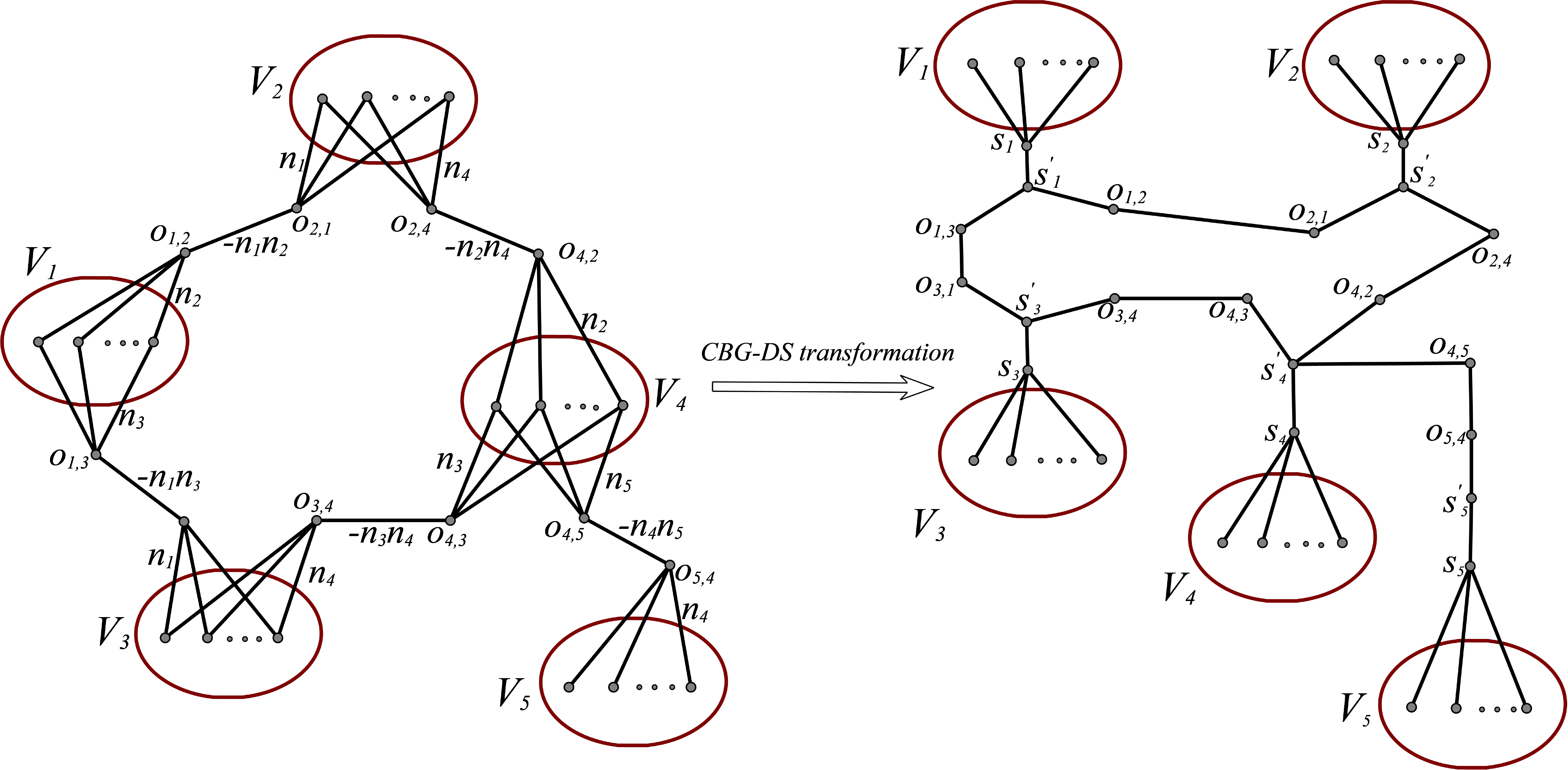}\\
\caption{The graphs $H[n_{1},n_{2},n_{3},n_{4},n_{5}]^{'}$ (left) and $H[n_{1},n_{2},n_{3},n_{4},n_{5}]^{''}$ (right).
}\label{fig3}
\end{figure}
The edge weights of $H[n_{1},n_{2},n_{3},n_{4},n_{5}]^{'}$ of Figure \ref{fig3} is obtained from $H[n_{1},n_{2},\cdots,n_{k}]$ (see Figure \ref{fig1}) by CBG-DS transformation.
The edge weights of $H[n_{1},n_{2},n_{3},n_{4},n_{5}]^{''}$ of Figure \ref{fig3} are as follows:
\begin{align*}
w(s_1s_1')&=-n_1(n_2 + n_3); w(s_2s_2')=-n_2(n_1 + n_4); w(s_3s_3')=-n_3(n_1 + n_4);\\
w(s_4s_4')&=-n_4(n_2 + n_3+n_5); w(s_5s_5')=-n_4n_5; w(s_1v_1^{t_1})=n_2 + n_3;\\
w(s_2v_2^{t_2})&=n_1 + n_4; w(s_3v_3^{t_3})=n_1 + n_4; w(s_4v_4^{t_4})=n_2 + n_3+n_5;\\
w(s_5v_5^{t_5})&=n_4, 1\leq t_i\leq n_i; w(o_{ij}o_{ji})=-n_in_j; w(s_i'o_{ij})=n_in_j, 1\leq i,j\leq 5.\\
\end{align*}

\begin{figure}[H]
\centering
\includegraphics[width=0.95\textwidth]{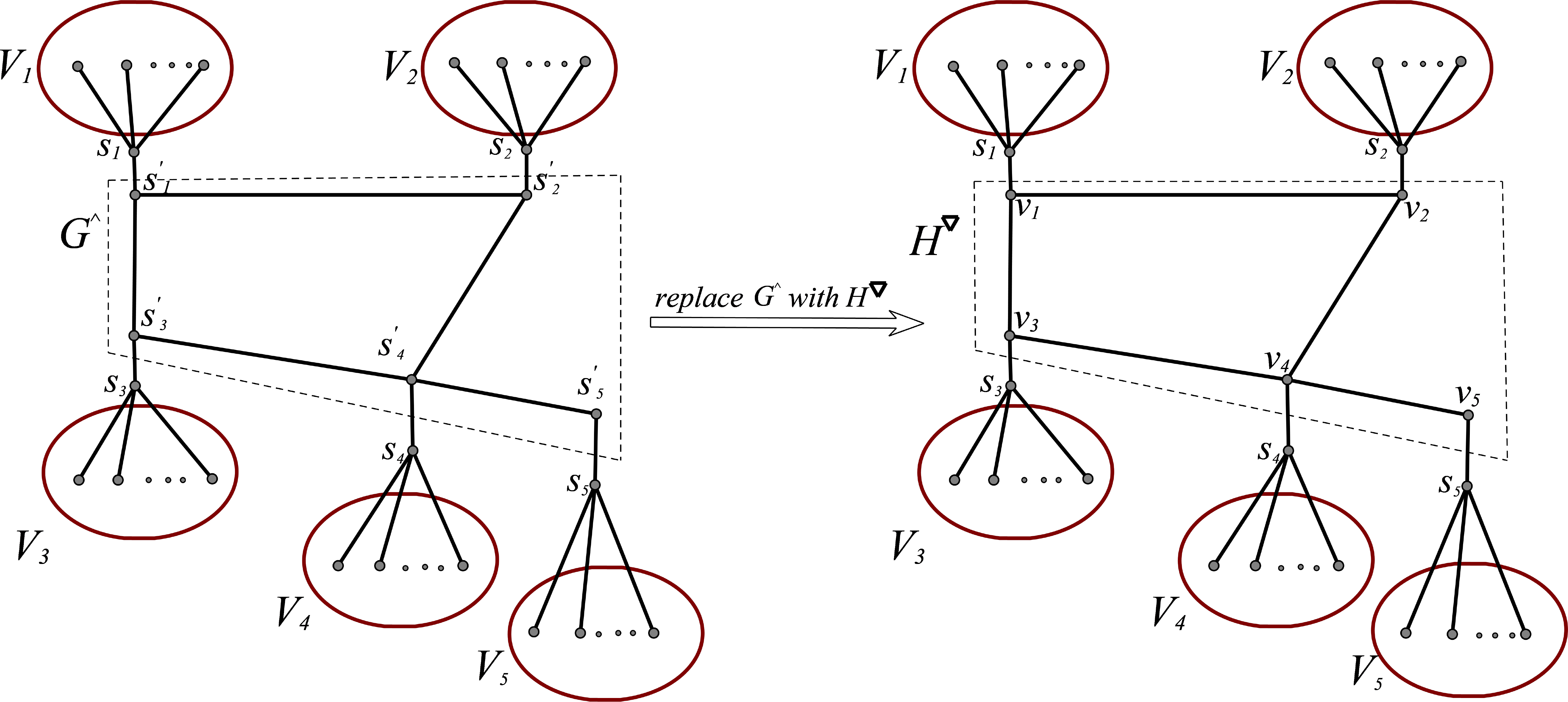}\\
\caption{The graphs $H[n_{1},n_{2},n_{3},n_{4},n_{5}]^{'''}$(left) and $H\star(n_1, n_2, \cdots, n_k)^{\omega}$ (right).
}\label{fig4}
\end{figure}

The edge weights of $H[n_{1},n_{2},n_{3},n_{4},n_{5}]^{'''}$ of Figure \ref{fig4} are as follows:
\begin{align*}
w(s_1s_1')&=-n_1(n_2 + n_3); w(s_2s_2')=-n_2(n_1 + n_4); w(s_3s_3')=-n_3(n_1 + n_4);\\
w(s_4s_4')&=-n_4(n_2 + n_3+n_5); w(s_5s_5')=-n_4n_5; w(s_1v_1^{t_1})=n_2 + n_3;\\
w(s_2v_2^{t_2})&=n_1 + n_4; w(s_3v_3^{t_3})=n_1 + n_4; w(s_4v_4^{t_4})=n_2 + n_3+n_5;\\
w(s_5v_5^{t_5})&=n_4, 1\leq t_i\leq n_i; w(s_{i}'s_{j}')=n_in_j, 1\leq i,j\leq 5.\\
\end{align*}

The vertex weights of $v_{i}$ of Figure \ref{fig4} are
$w(v_{i})=n_{i}, 1\leq i,j\leq 5.$

\begin{figure}[H]
\centering
\includegraphics[width=0.95\textwidth]{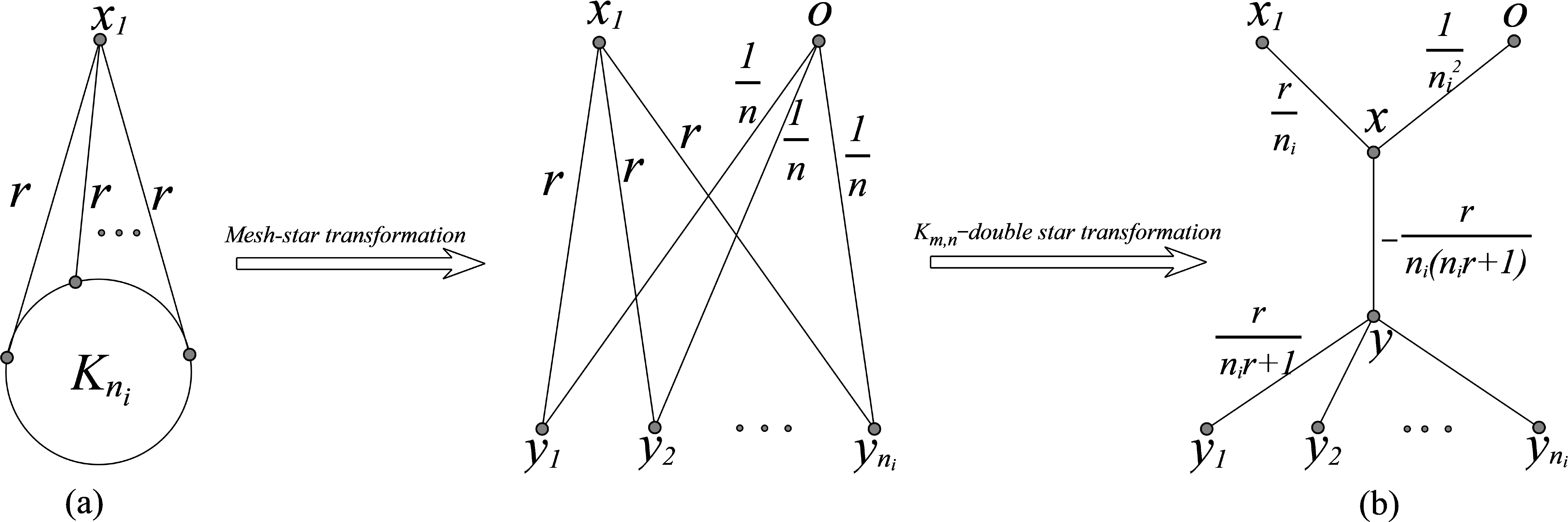}\\
\caption{An example for the transformations of Lemma \ref{le2}.
}\label{fig2}
\end{figure}
Note that we use $r_{G}(uv)$ to denote the resistance distance of edge $uv\in E(G)$.
We use $R_{G}(u,v)$ to denote the resistance distance between vertex $u\in V(G)$ and $v\in V(G)$.
\begin{lem}\label{le2}
Let $J = K_1 \vee K_{n_i}$ (shown in Figure \ref{fig2}(a)) be an edge-weighted graph, where $r_J(x_1 y_i)=r$, $r_J(y_iy_j)=1$ for $x_1\in V(K_1)$, $y_i\in V(K_{n_i})$, $1\leq i\neq j\leq n_i$. Then $R_J(x_1,y_i)=\frac{r(r + 1)}{n_ir+1}$, $R_J(y_i,y_j)=\frac{2r}{n_ir + 1}$ for $1\leq i\neq j\leq n_i$.
\end{lem}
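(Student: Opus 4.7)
The plan is to exploit the full symmetric-group action permuting $y_1, \ldots, y_{n_i}$ that preserves the weighted network, and in each case to merge equipotential vertices (a move that leaves effective resistance invariant) until only three nodes remain, at which point a single series–parallel reduction (equivalently, the $\Delta$-to-Y formula of Proposition~\ref{pro04}) finishes the computation.

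For $R_J(x_1, y_1)$ I would inject a unit current at $x_1$ and extract it at $y_1$. The subgroup of $S_{n_i}$ fixing $y_1$ permutes $\{y_2, \ldots, y_{n_i}\}$ and, together with uniqueness of the harmonic potential, forces these vertices to share a common potential; consequently they may be identified into a single node $Y$ without altering $R_J(x_1,y_1)$. The reduced network is a triangle on $\{x_1, y_1, Y\}$ with resistances
\[
r_{x_1 y_1} = r, \qquad r_{x_1 Y} = \frac{r}{n_i - 1}, \qquad r_{y_1 Y} = \frac{1}{n_i - 1},
\]
the last two coming from $n_i - 1$ parallel copies. A single series–parallel reduction then yields
\[
R_J(x_1, y_1) = \frac{r\bigl(r/(n_i - 1) + 1/(n_i - 1)\bigr)}{r + r/(n_i - 1) + 1/(n_i - 1)} = \frac{r(r + 1)}{n_i r + 1}.
\]

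For $R_J(y_i, y_j)$, say $R_J(y_1, y_2)$, I would inject at $y_1$ and extract at $y_2$. The symmetric-group action on $\{y_3, \ldots, y_{n_i}\}$ again lets me merge these vertices into a single node $Y$. Moreover, the involution $\sigma$ that swaps $y_1 \leftrightarrow y_2$ reverses the current injection, so (after a global shift) the harmonic potential satisfies $V \circ \sigma = -V$; the $\sigma$-fixed vertices $x_1$ and $Y$ therefore sit at potential zero and have equal potential, so they may be merged into a single node $Z$. Paralleling the $y_i x_1$-edge (resistance $r$) with the $y_i Y$-edge (resistance $1/(n_i - 2)$) gives a triangle on $\{y_1, y_2, Z\}$ with
\[
r_{y_1 y_2} = 1, \qquad r_{y_1 Z} = r_{y_2 Z} = \frac{r}{(n_i - 2)r + 1},
\]
and one more series–parallel step produces $R_J(y_1, y_2) = 2r/(n_i r + 1)$.

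The only technical point is the justification for merging equipotential vertices, which rests on the uniqueness of the harmonic potential with prescribed net current together with the stated automorphism invariance; this is routine and the subsequent algebra is elementary. I expect the antisymmetry argument pinning $V(x_1)=0$ in the second case to be the most delicate piece to present cleanly, but once it is in place the proof is essentially two triangle computations.
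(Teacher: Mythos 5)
Your argument is correct, and it reaches both formulas by a genuinely different route from the paper. The paper's proof first replaces the clique $K_{n_i}$ by a star via the mesh--star transformation and then applies the weighted $K_{m,n}$-double-star transformation of Lemma~\ref{le05} (introducing auxiliary vertices and a negative-resistance edge), after which the equivalent network $J^{\star}$ is a tree and both resistances drop out of the series principle alone. You instead exploit the automorphism group of $J$: identifying the equipotential vertices $y_2,\dots,y_{n_i}$ in the first case, and $x_1,y_3,\dots,y_{n_i}$ in the second (located by the antisymmetry $V\circ\sigma=-V$ under the swap $y_1\leftrightarrow y_2$), collapses $J$ to a triangle, and one series--parallel step gives $\frac{r(r+1)}{n_ir+1}$ and $\frac{2r}{n_ir+1}$; I have checked both reductions and the algebra, including the degenerate cases $n_i=1,2$ where the merged classes are empty or singletons and the formulas still hold. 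Your route is more elementary and self-contained, needing only the standard fact that short-circuiting vertices of equal potential preserves effective resistance rather than the transformations of Section~2; the paper's route is the one that generalizes to asymmetric bipartite configurations, which is why it is the workhorse elsewhere in Section~4. The one step you should write out in full is the normalization argument: the harmonic potential is unique only up to an additive constant, so from $-V\circ\sigma=V+c$ you must shift $V$ by $c/2$ before concluding that the $\sigma$-fixed vertices sit at potential zero. As you anticipate, that is the only delicate point, and it is standard.
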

\begin{proof}
By the mesh-star transformation and $K_{m,n}$-double star transformation with edge weight, we can obtain an edge-weighted network $J^{\star}$ shown in Figure \ref{fig2}(b), which is $V(J)$-equivalent to $J$.

By the series principle,
\begin{align*}
R_J(x_1,y_i)&=R_{J^{\star}}(x_1,y_i)=R_{J^{\star}}(x_1,x)+R_{J^{\star}}(x,y)+R_{J^{\star}}(y,y_i)=\frac{r(r + 1)}{n_ir+1},\\
R_J(y_i,y_j)&=R_{J^{\star}}(y_i,y_j)=R_{J^{\star}}(y_i,y)+R_{J^{\star}}(y,y_j)=\frac{2r}{n_ir + 1}.
\end{align*}

This completes the proof.
\end{proof}

Let $V_i^{p_{i}} = V(p_{i}K_t)$ and $V_i^{q_{i}} = q_iK_{1}$, for $i=1,2,\cdots,k$. Then $V(H_{p_1, p_2, \cdots, p_k}^{q_1, q_2, \cdots, q_k})=(V_1^{p_{1}}\cup V_1^{q_{1}})\cup(V_2^{p_{2}}\cup V_2^{q_{2}})\cup\cdots\cup(V_k^{p_{k}}\cup V_k^{q_{k}})$.
We maintain the previously defined notations and present a unified method for computing $R(u,v)$ for $u,v\in V(H_{p_1, p_2, \cdots, p_k}^{q_1, q_2, \cdots, q_k})$.

\begin{thm}\label{th31}
Let $G =H_{p_1, p_2, \cdots, p_k}^{q_1, q_2, \cdots, q_k}$. Then

    {\rm(i)} If $1\leq i\leq k$, $u\neq v$,
    \[
    R_G(u, v)=
    \begin{cases}
        2r, & \text{if } u, v\in V_i^{q_{i}};\\
        \frac{r(r + 1)}{tr + 1}+r, & \text{if } u\in V_i^{p_{i}}, v\in V_i^{q_{i}};\\
        \frac{2r}{tr + 1}, & \text{if } u, v\in V_i^{p_{i}}, uv\in E(G);\\
        \frac{2r(r + 1)}{tr + 1}, & \text{if } u, v\in V_i^{p_{i}}, uv\notin E(G).
    \end{cases}
    \]

    {\rm (ii)} If $1\leq i\neq j\leq k$,
    \[
    R_G(u, v)=R_{H^{\bigtriangledown}}(v_i, v_j)+
    \begin{cases}
        r + r'+r''+r''', & \text{if } u\in V_i^{q_{i}}, v\in V_j^{q_{j}};\\
        r+\frac{r'(r' + 1)}{tr' + 1}+r''+r''', & \text{if } u\in V_i^{q_{i}}, v\in V_j^{p_{j}};\\
        \frac{r(r + 1)}{tr + 1}+r'+r''+r''', & \text{if } u\in V_i^{p_{i}}, v\in V_j^{q_{j}};\\
        \frac{r(r + 1)}{tr + 1}+\frac{r'(r' + 1)}{tr' + 1}+r''+r''', & \text{if } u\in V_i^{p_{i}}, v\in V_j^{p_{j}}.
    \end{cases}
    \]
where $H^{\bigtriangledown}$ is the graph with $w(v_i)=n_i$ $(i=1,2,\cdots,k)$, $r = \frac{1}{\sum\limits_{v_a\in N_{H}(v_i)}n_a}$, $r'=\frac{1}{\sum\limits_{v_b\in N_{H}(v_j)}n_b}$, $r''=\frac{-1}{n_i\sum\limits_{v_a\in N_{H}(v_i)}n_a}$, $r'''=\frac{-1}{n_j\sum\limits_{v_b\in N_{H}(v_j)}n_b}$.

Further, we have
\begin{eqnarray*}
Kf(H_{p_1, p_2, \cdots, p_k}^{q_1, q_2, \cdots, q_k}) & = & \sum_{i=1}^{k}\left[\binom{q_{i}}{2}r_1+p_{i}q_{i}tr_{2}+\binom{t}{2}p_{i}r_3
+\binom{p_{i}}{2}t^{2}r_4\right] \\
&  & +\sum_{i=1}^{k-1}\left\{q_{i}\sum_{j=i+1}^{k}p_{j}(r_{5}+tr_{6})
+p_{i}\sum_{j=i+1}^{k}(q_{j}tr_{7}+p_{j}t^{2}r_{8})\right\},
\end{eqnarray*}
where $r_{i}$ $(i=1,2,\cdots,8)$ is defined in the proof.
\end{thm}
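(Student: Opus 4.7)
The plan is to reduce $G$ to a much simpler $V(G)$-equivalent network $G^{\star}$ by adapting the Xu-Xu construction of Lemma \ref{le31} so that the internal $K_{t}$ edges of $G$ survive the transformation, and then to read off both parts of the theorem through repeated cut-vertex decompositions via Lemma \ref{le32}.

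For the transformation, observe that the complete bipartite skeleton between distinct blobs $V_{i}$ and $V_{j}$ in $G$ coincides with that of the ordinary blow-up $H[n_{1},\ldots,n_{k}]$. I would apply the $K_{m,n}$-double star transformation of Lemmas \ref{le05}--\ref{le06} to each such bipartite piece in turn; since that transformation acts only on bipartite edges, the internal $K_{t}$ edges inside each $V_{i}^{p_{i}}$ are not disturbed. Iterating the Principle of Substitution (Proposition \ref{th03}) then yields a network $G^{\star}$ that is $V(G)$-equivalent to $G$, in which every $V_{i}$-vertex is a pendant to a new center $s_{i}$ via an edge of resistance $r=1/\sum_{v_{a}\in N_{H}(v_{i})}n_{a}$; each pair $(s_{i},v_{i})$ is joined by a negative-resistance edge of resistance $r''=-1/(n_{i}\sum_{a}n_{a})$; the auxiliary vertices $v_{1},\ldots,v_{k}$ carry weights $n_{i}$ and form the backbone $H^{\bigtriangledown}$; and the $K_{t}$ subgraphs inside each $V_{i}^{p_{i}}$ remain intact with unit-resistance edges.

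For part (i), since $s_{i}$ is a cut vertex of $G^{\star}$ separating $V_{i}$ from the remainder, for $u,v\in V_{i}$ one may compute $R_{G^{\star}}(u,v)$ inside the local subnetwork on $V_{i}\cup\{s_{i}\}$. In that subnetwork each $K_{t}$ copy in $V_{i}^{p_{i}}$ together with its $t$ pendant edges to $s_{i}$ is precisely the network $J=K_{1}\vee K_{t}$ of Lemma \ref{le2}, and combining Lemmas \ref{le2} and \ref{le32} produces all four values simultaneously: $2r$ between two $q$-pendants; $2r/(tr+1)$ inside one $K_{t}$; $2r(r+1)/(tr+1)$ across two $K_{t}$'s glued at $s_{i}$; and $r(r+1)/(tr+1)+r$ for the mixed $p/q$ case. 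For part (ii), when $u\in V_{i}$ and $v\in V_{j}$ with $i\ne j$, the four vertices $s_{i},v_{i},v_{j},s_{j}$ are successive cut vertices of $G^{\star}$, so iterating Lemma \ref{le32} gives
\[
R_{G}(u,v)=R(u,s_{i})+R(s_{i},v_{i})+R(v_{i},v_{j})+R(v_{j},s_{j})+R(s_{j},v).
\]
The endpoint terms are exactly the one-sided computations from part (i); the middle term equals $R_{H^{\bigtriangledown}}(v_{i},v_{j})$ by definition of the backbone; and the two short segments collapse to $r''$ and $r'''$ because $V_{i}$ (respectively $V_{j}$) is a dead end off $s_{i}$ (respectively $s_{j}$), providing no parallel path. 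Summing the four intra-block and four inter-block types with the combinatorial multiplicities $\binom{q_{i}}{2}$, $p_{i}q_{i}t$, $p_{i}\binom{t}{2}$, $\binom{p_{i}}{2}t^{2}$ and $q_{i}q_{j}$, $q_{i}p_{j}t$, $p_{i}tq_{j}$, $p_{i}p_{j}t^{2}$ reproduces the stated Kirchhoff formula, with $r_{1},\ldots,r_{8}$ identified as the corresponding resistance values from (i) and (ii).

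The main obstacle I anticipate is justifying the transformation step itself: one must verify that Lemmas \ref{le05}/\ref{le06} may be invoked on a single bipartite piece even when additional in-block $K_{t}$ edges are present, so that $G$ is not literally the source graph of Lemma \ref{le31}. This forces a one-bipartite-block-at-a-time application of the Principle of Substitution, each time observing that the acting bipartite subgraph is edge-disjoint from the $K_{t}$ edges it leaves behind. A secondary subtlety is the negative-conductance edge $s_{i}v_{i}$: to conclude $R(s_{i},v_{i})=r''$ one must confirm that it is the unique $s_{i}$-$v_{i}$ path in $G^{\star}$, which follows because $V_{i}$ together with its pendant and $K_{t}$ edges forms a cul-de-sac off $s_{i}$ after the transformation.
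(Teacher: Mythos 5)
Your proposal is correct and follows essentially the same route as the paper: replace the spanning blow-up subgraph $H[n_1,\ldots,n_k]$ by its double-star equivalent using Lemma \ref{le31} together with the Principle of Substitution (the paper does this in a single step on the whole spanning subgraph rather than one bipartite block at a time, but the resulting $G^{\star}$ is identical), then apply Lemma \ref{le2} to the local $K_1\vee K_t$ pieces for part (i) and decompose along the cut vertices $s_i,v_i,v_j,s_j$ via Lemma \ref{le32} for part (ii). One minor remark: your multiplicity $q_iq_j$ for the $r_5$ term in the Kirchhoff sum is the correct count, whereas the paper's displayed formula attaches $p_j$ to $r_5$, which appears to be a typo.
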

\begin{proof}
Without loss of generality, we let
\begin{align*}
r_1&=R_G(u, v),
\text{ for } u, v\in V_i^{q_{i}};\\
r_2&=R_G(u, v),
\text{ for } u\in V_i^{p_{i}}, v\in V_i^{q_{i}};\\
r_3&=R_G(u, v),
\text{ for } u, v\in V_i^{p_{i}}, uv\in E(G);\\
r_4&=R_G(u, v),
\text{ for } u, v\in V_i^{p_{i}}, uv\notin E(G);\\
r_5&=R_G(u, v),
\text{ for } u\in V_i^{q_{i}}, v\in V_j^{q_{j}};\\
r_6&=R_G(u, v),
\text{ for } u\in V_i^{q_{i}}, v\in V_j^{p_{j}};\\
r_7&=R_G(u, v),
\text{ for } u\in V_i^{p_{i}}, v\in V_j^{q_{j}};\\
r_8&=R_G(u, v),
\text{ for } u\in V_i^{p_{i}}, v\in V_j^{p_{j}}.
\end{align*}

Observe that $G$ contains the spanning subgraph $H[n_1, n_2,\cdots,n_k]$. Applying Lemma~\ref{le31}, we substitute this subgraph with its electrically equivalent graph $H\star(n_1, n_2,\cdots,n_k)^{\omega}$, resulting in an edge-weighted graph $G^\star$ that preserves $V(G)$-equivalence with $G$ via substitution principles. Subsequently,
\[R_G(u, v)=R_{G^{\star}}(u, v),\text{ for any } u, v\in V(G), u\neq v.\tag{1}\]

By construction, the graph $G^\star$ is formed by $H\star(n_1, n_2, \cdots, n_k)^{\omega}$ by adding the edge set $\bigcup_{i = 1}^{k}E(p_{i}K_{t})$. Subsequently, $V(G^{\star})=V(H)\cup\bigcup_{i = 1}^{k}(V_i^{p_{i}}\cup V_i^{q_{i}}\cup s_i)$, and
\[E(G^{\star})=E(H)\cup\bigcup_{\substack{1\leq i\leq k}}\left(\{s_iv_i^{o_{i}}:v_i^{o_{i}}\in V_i^{p_{i}}\cup V_i^{q_{i}}, 1\leq o_{i}\leq n_i\}\cup\{s_iv_i\} \cup E(p_{i}K_{t})\right),\]
with resistances of edges in $G^\star$ for $1\leq i\neq j\leq k$ satisfying the following relations
\begin{align*}
r_{v_iv_j}&=
\frac{1}{n_in_j},\text{ for } v_iv_j\in E(H),\\
r_{s_iv_i^{o_{i}}}&=\frac{1}{\sum\limits_{v_a\in N_{H}(v_i)}n_a},\text{ for } 1\leq o_{i}\leq n_i,\\
r_{s_iv_i}&=\frac{1}{-n_i\sum\limits_{v_a\in N_{H}(v_i)}n_a},\\
r_{e}& = 1,\text{ for } e\in E(p_{i}K_{t}).
\end{align*}

For distinct vertices
$u, v \in V_i^{q_i}$, both edges $s_iu$ and $s_iv$ are pendant edges with resistance $r=\frac{1}{\sum\limits_{v_a\in N_{H}(v_i)}n_a}$. Combining the series principle with Equation~(1) directly yields
\[r_1=R_{G^{\star}}(u, v)=2r.\tag{2}\]

For vertices
$u \in V_i^{p_i}$ and $v \in V_i^{q_i}$, application of the series principle combined with Lemma~\ref{le2} establishes
\[r_2 = R_{G^{\star}}(u, v)=R_{G_{1}^{\star}}(u, v)=\frac{r(r + 1)}{tr + 1}+r,\tag{3}\]
where $G_1^\star$ denotes the induced subgraph of $G^\star$ shown in Figure~\ref{fig7} (a).
\begin{figure}[H]
\centering
\includegraphics[width=0.85\textwidth]{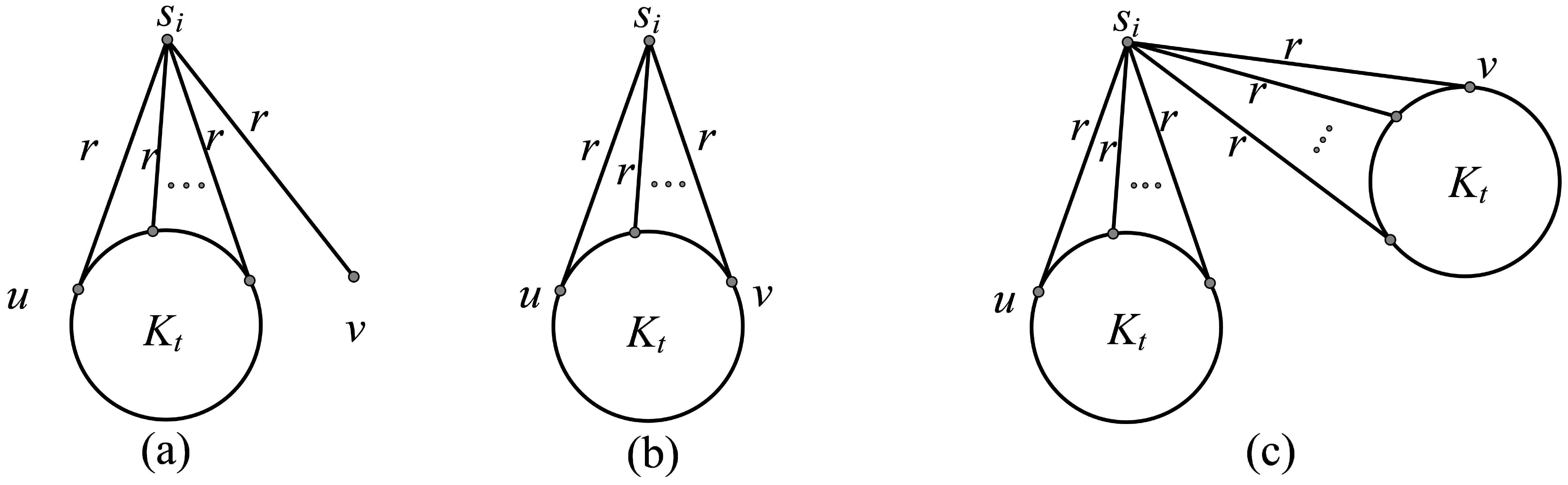}\\
\caption{The graphs $(a)$ $G_{1}^{*}$, $(b)$ $G_{2}^{*}$ and $(c)$ $G_{3}^{*}$.
}\label{fig7}
\end{figure}

For distinct vertices
$u, v \in V_i^{p_i}$ with $uv \in E(G)$, application of Lemma~\ref{le2} directly yields
\[r_3 = R_{G^{\star}}(u, v)=R_{G_2^{\star}}(u, v)=\frac{2r}{tr + 1}.\tag{4}\]

For distinct vertices $u, v \in V_i^{p_i}$ with $uv \notin E(G)$, applying Lemma~\ref{le2} yields
\[r_4 = R_{G^{\star}}(u, v)=R_{G_3^{\star}}(u, v)=\frac{2r(r + 1)}{tr + 1},\tag{5}\]
where $G_2^\star$, $G_3^\star$ are induced subgraphs of $G^\star$, specifically depicted in Figure~\ref{fig7} (b) and (c) respectively.

The graph $H\star(n_1,n_2,\cdots,n_k)^\omega$ includes a vertex-weighted subgraph $H^\bigtriangledown$ induced on $V(H)$, where $w(v_i)=n_i$. For vertices $u \in V_i^{q_i}$ and $v \in V_j^{q_j}$, their connecting edges $s_iu$ and $s_jv$ form pendent edges with resistances $r = \frac{1}{\sum\limits_{v_a\in N_{H}(v_i)}n_a}$ and $r' = \frac{1}{\sum\limits_{v_b\in N_{H}(v_j)}n_b}$ respectively. Through Lemma~\ref{le32}, we derive
\begin{align*}
r_5&=R_{G^{\star}}(u, v)=R_{G^{\star}}(u, s_i)+R_{G^{\star}}(s_i, v_i)+R_{H^{\bigtriangledown}}(v_i, v_j)+R_{G^{\star}}(v_j, s_j)+R_{G^{\star}}(s_j, v)\\
&=r + r'+r''+r'''+R_{H^{\bigtriangledown}}(v_i, v_j).\tag{6}
\end{align*}

\begin{figure}[H]
\centering
\includegraphics[width=0.85\textwidth]{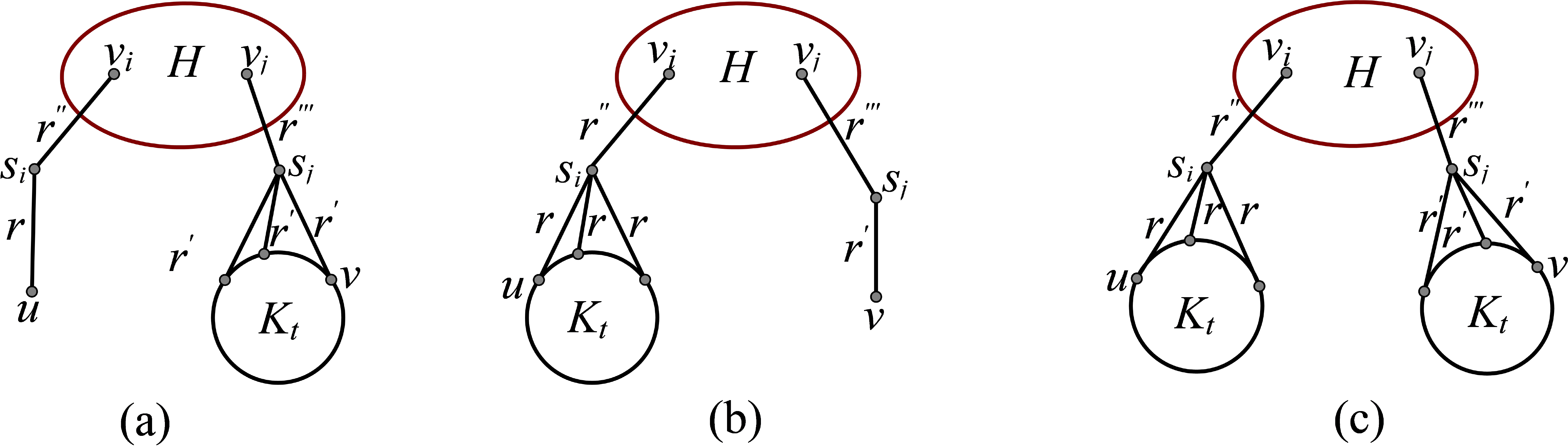}\\
\caption{The induced subgraphs $(a)$ $G_{4}^{*}$, $(b)$ $G_{5}^{*}$ and $(c)$ $G_{6}^{*}$ of $G^{*}$.
}\label{fig8}
\end{figure}

For the three remaining cases: (i)
$u \in V_i^{q_i}$, $v \in V_j^{p_j}$; (ii) $u \in V_i^{p_i}$, $v \in V_j^{q_j}$; and (iii) $u \in V_i^{p_i}$, $v \in V_j^{p_j}$ (visually summarized in Fig.~\ref{fig8}(a)-(c)), the combined application of the series principle with Lemmas~\ref{le32} and~\ref{le2}
 yields
\begin{align*}
r_6&=R_{G^{\star}}(u, v)=R_{G_4^{\star}}(u, s_i)+R_{G_4^{\star}}(s_i, v_i)+R_{H^{\bigtriangledown}}(v_i, v_j)+R_{G_4^{\star}}(v_j, s_j)+R_{G_4^{\star}}(s_j, v)\\
&=r + r''+R_{H^{\bigtriangledown}}(v_i, v_j)+r'''+\frac{r'(r' + 1)}{tr' + 1}.\tag{7}\\
r_7&=R_{G^{\star}}(u, v)=R_{G_5^{\star}}(u, s_i)+R_{G_5^{\star}}(s_i, v_i)+R_{H^{\bigtriangledown}}(v_i, v_j)+R_{G_5^{\star}}(v_j, s_j)+R_{G_5^{\star}}(s_j, v)\\
&=\frac{r(r + 1)}{tr + 1}+r''+R_{H^{\bigtriangledown}}(v_i, v_j)+r'''+r'.\tag{8}\\
r_8&=R_{G^{\star}}(u, v)=R_{G_6^{\star}}(u, s_i)+R_{G_6^{\star}}(s_i, v_i)+R_{H^{\bigtriangledown}}(v_i, v_j)+R_{G_6^{\star}}(v_j, s_j)+R_{G_6^{\star}}(s_j, v)\\
&=\frac{r(r + 1)}{tr + 1}+r''+R_{H^{\bigtriangledown}}(v_i, v_j)+r'''+\frac{r'(r' + 1)}{tr' + 1}.\tag{9}
\end{align*}

The required result emerges through application of the relationships established in Equations~(2)-(9).

We can also calculate the Kirchhoff index of the graph $H_{p_1, p_2, \cdots, p_k}^{q_1, q_2, \cdots, q_k}$.
\begin{eqnarray*}
Kf(H_{p_1, p_2, \cdots, p_k}^{q_1, q_2, \cdots, q_k}) & = & \sum_{i=1}^{k}\left[\binom{q_{i}}{2}r_1+p_{i}q_{i}tr_{2}+\binom{t}{2}p_{i}r_3
+\binom{p_{i}}{2}t^{2}r_4\right] \\
&  & +\sum_{i=1}^{k-1}\left\{q_{i}\sum_{j=i+1}^{k}p_{j}(r_{5}+tr_{6})
+p_{i}\sum_{j=i+1}^{k}(q_{j}tr_{7}+p_{j}t^{2}r_{8})\right\}.
\end{eqnarray*}

This completes the proof.
\end{proof}

\textbf{Recall that if $G_i = p_iK_{t}\cup q_iK_{1}$, then
we write $H[G_1, G_2, \cdots, G_k]$ as $H_{p_1, p_2, \cdots, p_k}^{q_1, q_2, \cdots, q_k}$, and call it as the generalized blow-up graphs of $H$.}
Let $t=2$ in the Theorem \ref{th31}, we obtained the main results (Theorem 4.3) of \cite{xuxu2025}.

By Theorem \ref{th31}, we have the following corollary.

\begin{cor}\label{c37}
Let $H^{B}$ be an unbalanced blow-up of graph $H$ with $B=\{G_{i}\}_{v_{i}\in V(H)}=\{\overline{K_{n_{i}}}[V_{e}],K_{n_{i}}[V_{f}]\}$, where $V_{e}=\{v_{i}|G_{i}=\overline{K_{n_{i}}}\}$ and $V_{f}=\{v_{i}|G_{i}=K_{n_{i}}\}$, $i=1,2,\cdots,k$.
Then for $v_{i},v_{j}\in V(H)$ and $p\in \{1,2,\cdots,n_{i}\}$, $q\in \{1,2,\cdots,n_{j}\}$, we have

    \[
    R_{H^{B}}(v_{ip}, v_{jq})=
    \begin
{cases}
        2r, & \text{if } v_{i}=v_{j}\in V_{e} \ and \ p\neq q;\\
        \frac{2r}{n_{i}r+1}, & \text{if } v_{i}=v_{j}\in V_{f} \ and \ p\neq q;\\
        R_{H^{\bigtriangledown}}(v_{i},v_{j})+r+r'+r''+r''', & \text{if } v_{i}\in V_{e}\neq v_{j}\in V_{e};\\
        R_{H^{\bigtriangledown}}(v_{i},v_{j})+\frac{r(r+1)}{n_{i}r+1}+\frac{r'(r'+1)}{n_{j}r'+1}+r''+r''', & \text{if }v_{i}\in V_{f}\neq v_{j}\in V_{f};\\
        R_{H^{\bigtriangledown}}(v_{i},v_{j})+r+\frac{r'(r'+1)}{n_{j}r'+1}+r''+r''', & \text{if } v_{i}\in V_{e}, v_{j}\in V_{f}.
    \end
{cases}
    \]
where $H^{\bigtriangledown}$ is a vertex-weighted graph with vertex weights $w(v_i)=n_i$, and $r = \frac{1}{\sum\limits_{v_a\in N_{H}(v_i)}n_a}$, $r'=\frac{1}{\sum\limits_{v_b\in N_{H}(v_j)}n_b}$, $r''=\frac{-1}{n_i\sum\limits_{v_a\in N_{H}(v_i)}n_a}$, $r'''=\frac{-1}{n_j\sum\limits_{v_b\in N_{H}(v_j)}n_b}$. Further, we have
$$Kf(H^{B})=
\sum\limits_{v_{i}\in V_{e}}\binom{n_{i}}{2}r_1'+\sum\limits_{v_{i}\in V_{f}}\binom{n_{i}}{2}r_2'+\sum\limits_{\{v_{i},v_{j}\}\in V_{e}}^{v_{i}\neq v_{j}}n_{i}n_{j}r_3'+\sum\limits_{\{v_{i},v_{j}\}\in V_{f}}^{v_{i}\neq v_{j}}n_{i}n_{j}r_4'+\sum\limits_{v_{i}\in V_{e}}\sum\limits_{v_{j}\in V_{f}}n_{i}n_{j}r_{5}',$$
where $r_{i}'$ $(i=1,2,\cdots,5)$ is defined in the proof.
\end{cor}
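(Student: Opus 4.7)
The plan is to mirror the argument of Theorem \ref{th31}, with the refinement that the intrinsic structure of each block $G_i$ is now either $\overline{K_{n_i}}$ or $K_{n_i}$, so that the role played by the uniform parameter $t$ in Theorem \ref{th31} is taken over by the block-dependent parameter $n_i$. First I would apply Lemma \ref{le31} to replace the spanning subgraph $H[n_1,\dots,n_k]$ of $H^B$ by its resistance-equivalent weighted surrogate $H\star(n_1,\dots,n_k)^{\omega}$, obtaining a network $(H^B)^{\star}$ that is $V(H^B)$-equivalent to $H^B$. In this surrogate the pendant edges $s_iv_{ip}$ carry resistance $r=\frac{1}{\sum_{v_a\in N_H(v_i)}n_a}$, the edges $s_iv_i$ carry the negative resistance $r''$, and the only additional ``intrinsic'' edges left to account for are the edges of $K_{n_i}$ for each $v_i\in V_f$ (no extra edges arise for $v_i\in V_e$).

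I would next handle the intra-block cases. If $v_i\in V_e$ and $p\neq q$, then after the Principle of Elimination collapses everything outside the block, the two pendant edges $s_iv_{ip}$ and $s_iv_{iq}$ form the unique path joining $v_{ip}$ and $v_{iq}$, so the series principle yields $r_1'=2r$. If $v_i\in V_f$, the induced configuration on $\{s_i\}\cup V_i$ is exactly the network $J=K_1\vee K_{n_i}$ of Lemma \ref{le2}; applying that lemma with the role of $t$ played by $n_i$ delivers $r_2'=\frac{2r}{n_ir+1}$ directly.

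For two vertices $v_{ip},v_{jq}$ in distinct blocks, $s_i$ and $s_j$ are cut vertices of $(H^B)^{\star}$ separating the two copies, so Lemma \ref{le32} (applied twice) gives
\[
R_{(H^B)^{\star}}(v_{ip},v_{jq})=R(v_{ip},s_i)+R(s_i,v_i)+R_{H^{\bigtriangledown}}(v_i,v_j)+R(v_j,s_j)+R(s_j,v_{jq}).
\]
The middle three contributions match those of Theorem \ref{th31}, while the two endpoint terms depend only on the class of their block: they equal $r$ (resp.\ $r'$) when the endpoint lies in $V_e$, by the series principle on a pendant edge, and $\frac{r(r+1)}{n_ir+1}$ (resp.\ $\frac{r'(r'+1)}{n_jr'+1}$) when it lies in $V_f$, by Lemma \ref{le2} with $t$ specialized to $n_i$ (resp.\ $n_j$). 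Distributing over the three inter-block cases produces $r_3'$, $r_4'$, and $r_5'$ exactly as stated.

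Finally, I would assemble the Kirchhoff index by tallying unordered vertex pairs: each $v_i\in V_e$ contributes $\binom{n_i}{2}r_1'$ intra-block, each $v_i\in V_f$ contributes $\binom{n_i}{2}r_2'$, and each unordered pair $\{v_i,v_j\}$ contributes $n_in_j$ inter-block distances, weighted by $r_3'$, $r_4'$, or $r_5'$ according to the class membership of $v_i$ and $v_j$. The only real obstacle is bookkeeping: one must verify that Lemma \ref{le2} and the decompositions used in Theorem \ref{th31} remain valid when the ``clique parameter'' is replaced by the block-dependent $n_i$, but since every reduction is local to a single block, this adjustment is routine and the overall scheme of Theorem \ref{th31} carries through essentially unchanged.
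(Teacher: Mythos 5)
Your proposal follows essentially the same route as the paper's proof: substitute the spanning subgraph $H[n_1,\dots,n_k]$ by $H\star(n_1,\dots,n_k)^{\omega}$ via Lemma \ref{le31}, compute the intra-block resistances by the series principle and Lemma \ref{le2} (with the clique parameter $n_i$ in place of $t$), and decompose the inter-block resistances through the cut vertices $s_i$, $s_j$ using Lemma \ref{le32}, before tallying pairs for the Kirchhoff index. The argument is correct and matches the paper's proof in both structure and detail.
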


\begin{proof}
Let $v_{ip}\neq v_{jq}\in V(H^{B})$. Then
\begin{align*}
r_1'&=R(v_{ip}, v_{jq}), \text{ for } v_{i}=v_{j}\in V_{e} \ and \ p\neq q;\\
r_2'&=R(v_{ip}, v_{jq}), \text{ for } v_{i}=v_{j}\in V_{f} \ and \ p\neq q;,\\\
r_3'&=R(v_{ip}, v_{jq}), \text{ for } v_{i}\in V_{e}\neq v_{j}\in V_{e};\\
r_4'&=R(v_{ip}, v_{jq}), \text{ for } v_{i}\in V_{f}\neq v_{j}\in V_{f};\\
r_5'&=R(v_{ip}, v_{jq}), \text{ for } v_{i}\in V_{e}, v_{j}\in V_{f}.
\end{align*}

The graph $H^B$ contains a spanning subgraph $H[n_1, n_2, \cdots, n_k]$. Applying Lemma~\ref{le31}, we substitute this subgraph with its electrically equivalent graph $H\star(n_1,n_2,\cdots,n_k)^\omega$ (shown in Figure~\ref{fig4}), resulting in an edge-weighted graph $H^{B\star}$ that preserves $V(H^B)$
-equivalence through substitution principles. Subsequently,
\[R_{H^{B}}(u, v)=R_{{H^{B}}^{\star}}(v_{ip}, v_{jq}),\text{ for any } v_{ip}\neq v_{jq}\in V(H^{B}).\]

The graph $H^{B\star}$ is constructed by $H\star(n_1,\ldots,n_k)^\omega$ with adding  $\bigcup\limits_{i\in f}E(K_{n_i})$, preserving the structural extension defined for $H^B$.
Thus $V({H^{B}}^{\star})=V(H)\cup\{\bigcup\limits_{i\in e}V(\overline{K_{n_i}})\}\cup\{\bigcup\limits_{i\in f}V(K_{n_i})\}\cup\{\bigcup\limits_{i=1}^{k}s_{i}\}$,
$E({H^{B}}^{
\star})=E(H)\cup\{\bigcup\limits_{i\in e}\{s_iv_i^{o_{i}}:v_i^{o_{i}}\in V(\overline{K_{n_i}}),1\leq o_{i}\leq n_i\}\}\cup\{\bigcup\limits_{i\in f}\{s_iv_i^{o_{i}}:v_i^{o_{i}}\in V(K_{n_i}),1\leq o_{i}\leq n_i\}\}\cup\{s_iv_i\}\cup \{E(K_{n_i}), i\in f\}$
and for $1\leq i\neq j\leq k$, we have
\begin{align*}
r_{v_iv_j}&=
\frac{1}{n_in_j},\text{ for } v_iv_j\in E(H),\\
r_{s_iv_i^{o_{i}}}&=\frac{1}{\sum\limits_{v_{a}\in N_{H}(v_{i})}n_{a}},\text{ for } 1\leq o_{i}\leq n_i,\\
r_{s_iv_i}&=\frac{-1}{n_{i}\sum\limits_{v_{a}\in N_{H}(v_{i})}n_{a}},\\
r_{e}& = 1,  \text{ for } e\in E(K_{n_i}), i\in f.
\end{align*}

By Lemma \ref{le32} and Lemma \ref{le2}, we have
\begin{align*}
r_1'&=2r,\\
r_2'&=\frac{2r}{n_{i}r+1},\\
r_3'&=R_{H^{\bigtriangledown}}(v_{i},v_{j})+r+r'+r''+r''',\\
r_4'&=R_{H^{\bigtriangledown}}(v_{i},v_{j})+\frac{r(r+1)}{n_{i}r+1}+\frac{r'(r'+1)}{n_{j}r'+1}+r''+r''',\\
r_5'&=R_{H^{\bigtriangledown}}(v_{i},v_{j})+r+\frac{r'(r'+1)}{n_{j}r'+1}+r''+r'''.
\end{align*}

We can also calculate the Kirchhoff index of the graph $H^{B}$.
$$Kf(H^{B})=
\sum\limits_{v_{i}\in V_{e}}\binom{n_{i}}{2}r_1'+\sum\limits_{v_{i}\in V_{f}}\binom{n_{i}}{2}r_2'+\sum\limits_{\{v_{i},v_{j}\}\in V_{e}}^{v_{i}\neq v_{j}}n_{i}n_{j}r_3'+\sum\limits_{\{v_{i},v_{j}\}\in V_{f}}^{v_{i}\neq v_{j}}n_{i}n_{j}r_4'+\sum\limits_{v_{i}\in V_{e}}\sum\limits_{v_{j}\in V_{f}}n_{i}n_{j}r_{5}'.$$

The proof is completed.
\end{proof}

Let $n_{i}=n_{j}=n$ in the Corollary \ref{c37}, we obtained the main results (Theorem 3.1 and Theorem 3.2) of \cite{suny2025}.
Next, we consider the resistance distances of generalized blow-up graphs of some special graphs, such as $K_{k}$, $K_{k}-pK_{2}$, $K_{k}-S_{d}$ and $S_{k}$.

\begin{cor}\label{c31}
Let $H = K_k$ and $G = H_{p_1, p_2, \cdots, p_k}^{q_1, q_2, \cdots, q_k}$ with $|V(G)|=n$.

    {\rm (i)} If $1\leq i\leq k$, $u\neq v$,
    \[
    R_G(u, v)=
    \begin
{cases}
        \frac{2}{n - n_i}, & \text{if } u, v\in V_i^{q_i};\\
        \frac{2(n - n_i)+t + 1}{(n - n_i)(n - n_i + t)}, & \text{if } u\in V_i^{p_i}, v\in V_i^{q_i};\\
        \frac{2}{n - n_i + t}, & \text{if } u, v\in V_i^{p_i}, uv\in E(G);\\
        \frac{2(n - n_i + 1)}{(n - n_i)(n - n_i + t)}, & \text{if } u, v\in V_i^{p_i}, uv\notin
 E(G).
    \end
{cases}
    \]

    {\rm (ii)} If $1\leq i\neq j\leq k$,
    \[
    R_G(u, v)=
    \begin
{cases}
        \frac{(n - 1)(2n - n_i - n_j)}{n(n - n_i)(n - n_j)}, & \text{if } u\in V_i^{q_i}, v\in V_j^{q_j};\\
        \frac{(n - 1)(n - n_i + 1)+1 - t}{n(n - n_i)(n - n_i + t)}+\frac{n - 1}{n(n - n_j)}, & \text{if } u\in V_i^{p_i}, v\in V_j^{q_j};\\
        \frac{(n - 1)(n - n_i + 1)+1 - t}{n(n - n_i)(n - n_i + t)}+\frac{(n - 1)(n - n_j + 1)+1 - t}{n(n - n_j)(n - n_j + t)}, & \text{if } u\in V_i^{p_i}, v\in V_j^{p_j}.
    \end
{cases}
    \]
\end{cor}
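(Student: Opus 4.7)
The plan is to specialize Theorem~\ref{th31} to the case $H = K_k$. Because every vertex of $K_k$ is adjacent to all others, $N_H(v_i) = V(H)\setminus\{v_i\}$ and hence $\sum_{v_a\in N_H(v_i)} n_a = n - n_i$. Substituting this into the quantities defined in Theorem~\ref{th31} immediately gives
\[
r = \frac{1}{n-n_i},\quad r' = \frac{1}{n-n_j},\quad r'' = \frac{-1}{n_i(n-n_i)},\quad r''' = \frac{-1}{n_j(n-n_j)}.
\]
Part~(i) then follows by direct substitution into the four expressions of Theorem~\ref{th31}(i); for example $2r = \frac{2}{n-n_i}$ and $\frac{2r}{tr+1} = \frac{2}{n-n_i+t}$, with the two remaining formulas obtained after clearing a single common denominator.

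For part~(ii) the missing ingredient is the resistance distance $R_{H^{\bigtriangledown}}(v_i, v_j)$ in the vertex-weighted complete graph $K_k^{\bigtriangledown}$, where $w(v_\ell) = n_\ell$ and so the induced edge resistances are $\frac{1}{n_\ell n_m}$. I claim that
\[
R_{H^{\bigtriangledown}}(v_i,v_j) = \frac{n_i + n_j}{n\, n_i n_j}.
\]
The most direct route is a Laplacian computation: the weighted Laplacian has the structured form $L = n\,\mathrm{diag}(n_1,\ldots,n_k) - \mathbf{n}\mathbf{n}^T$ with $\mathbf{n} = (n_1,\ldots,n_k)^T$. Writing $x = \mathrm{diag}(n_1,\ldots,n_k)^{-1} y$ reduces $Lx = e_i - e_j$ to the scalar-parameter system $n y - (\mathbf{1}^T y)\mathbf{n} = e_i - e_j$; fixing the free constant by imposing $\mathbf{1}^T x = 0$ and then reading off $x_i - x_j$ produces the claim. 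Alternatively, one can apply a weighted analogue of the mesh--star transformation of Figure~\ref{fig02} to show that $K_k^{\bigtriangledown}$ is $V(K_k)$-equivalent to a star with centre $c$ and edge resistances $\frac{1}{n n_\ell}$, after which the series principle gives the same value.

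With this auxiliary formula in hand, each sub-case of (ii) reduces to combining fractions. In the $V_i^{q_i}$-to-$V_j^{q_j}$ case the sum $\frac{n_i+n_j}{nn_in_j} + \frac{n_i-1}{n_i(n-n_i)} + \frac{n_j-1}{n_j(n-n_j)}$ collapses, over the common denominator $n(n-n_i)(n-n_j)$, to the stated $\frac{(n-1)(2n-n_i-n_j)}{n(n-n_i)(n-n_j)}$. For the sub-cases involving a $V_i^{p_i}$ endpoint I would split $\frac{n_i+n_j}{nn_in_j} = \frac{1}{nn_i} + \frac{1}{nn_j}$ and precompute $\frac{r(r+1)}{tr+1} = \frac{n-n_i+1}{(n-n_i)(n-n_i+t)}$, so that the $i$-side terms pair naturally and collapse to $\frac{(n-1)(n-n_i+1)+1-t}{n(n-n_i)(n-n_i+t)}$, while the $j$-side terms collapse to either $\frac{n-1}{n(n-n_j)}$ or its $p$-type analogue $\frac{(n-1)(n-n_j+1)+1-t}{n(n-n_j)(n-n_j+t)}$. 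The main obstacle is not conceptual but algebraic bookkeeping: carrying the negative signs of $r''$ and $r'''$ through the common denominator correctly, and recognising the identity $n(n-n_i) + n_i - t = (n-1)(n-n_i+1) + 1 - t$ that delivers the stated numerator.
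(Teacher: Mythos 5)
Your proposal is correct and follows exactly the route the paper intends: the paper states this corollary with no written proof, presenting it as a direct specialization of Theorem~\ref{th31} to $H=K_k$, which is precisely what you carry out. Your auxiliary computation $R_{H^{\bigtriangledown}}(v_i,v_j)=\frac{n_i+n_j}{n\,n_in_j}$ via the weighted Laplacian $L=n\,\mathrm{diag}(n_1,\ldots,n_k)-\mathbf{n}\mathbf{n}^T$ is valid, and the simplification identity $n(n-n_i)+n_i-t=(n-1)(n-n_i+1)+1-t$ that you flag is indeed the step that reconciles the combined fractions with the stated numerators.
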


\begin{cor}\label{c34}
Let $H = K_k - pK_{2}$ and $G = H_{p_1, p_2, \cdots, p_k}^{q_1, q_2, \cdots, q_k}$ with $|V(G)|=n$.

            {\rm (i)}{\rm (a)} If $1\leq i\leq 2p$, $v_iv_{\ell_{i}}\in M$,
            \[
            R_G(u, v)=
            \begin
{cases}
                \frac{2}{n - n_i - n_{\ell_i}}, & \text{if } u, v\in V_i^{q_i};\\
                \frac{2(n - n_i - n_{\ell_i})+t + 1}{(n - n_i - n_{\ell_i})(n - n_i - n_{\ell_i}+t)}, & \text{if } u\in V_i^{p_i}, v\in V_i^{q_i};\\
                \frac{2}{n - n_i - n_{\ell_i}+t}, & \text{if } u, v\in V_i^{p_i}, uv\in E(G);\\
                \frac{2(n - n_i - n_{\ell_i}+1)}{(n - n_i - n_{\ell_i})(n - n_i - n_{\ell_i}+t)}, & \text{if } u, v\in V_i^{p_i}, uv\notin E(G).
            \end
{cases}
            \]

            {\rm (i)}{\rm (b)} If $2p + 1\leq i\leq k$,
            \[
            R_G(u, v)=
            \begin
{cases}
                \frac{2}{n - n_i}, & \text{if } u, v\in V_i^{q_i};\\
                \frac{2(n - n_i)+t + 1}{(n - n_i)(n - n_i + t)}, & \text{if } u\in V_i^{p_i}, v\in V_i^{q_i};\\
                \frac{2}{n - n_i + t}, & \text{if } u, v\in V_i^{p_i}, uv\in E(G);\\
                \frac{2(n - n_i + 1)}{(n - n_i)(n - n_i + t)}, & \text{if } u, v\in V_i^{p_i}, uv\notin E(G).
            \end
{cases}
            \]

    {\rm (ii)}{\rm (a)} If $1\leq i\neq j\leq 2p$, $v_iv_j\in M$,
    \[
    R_G(u, v)= R_1+
    \begin
{cases}
        \frac{n_i - 1}{n_i(n - n_i - n_j)}+\frac{n_j - 1}{n_j(n - n_i - n_j)}, & \text{if } u\in V_i^{q_i}, v\in V_j^{q_j};\\
        \frac{n - n_i - n_j + 1}{(n - n_i - n_j)(n - n_i - n_j + t)}+\frac{n_in_j - n_i - n_j}{n_in_j(n - n_i - n_j)}, & \text{if } u\in V_i^{p_i}, v\in V_j^{q_j};\\
        \frac{2(n - n_i - n_j + 1)}{(n - n_i - n_j)(n - n_i - n_j + t)}-\frac{n_i + n_j}{n_in_j(n - n_i - n_j)}, & \text{if } u\in V_i^{p_i}, v\in V_j^{p_j}.
    \end
{cases}
    \]
    where
$R_1 = \frac{n_i + n_j}{n_in_j(n - n_i - n_j)}$.

       {\rm (ii)}{\rm (b)}    If
$1\leq i\neq j\leq 2p$, $v_iv_j\notin M$ and $v_iv_{\ell_i}, v_jv_{\ell_j}\in M$,
\[
R_G(u, v)= R_2 +
\begin
{cases}
    \frac{n_i - 1}{n_i(n - n_i - n_{\ell_i})}+\frac{n_j - 1}{n_j(n - n_j - n_{\ell_j})}, & \text{if } u\in V_i^{q_i}, v\in V_j^{q_j};\\
    \frac{(n_i - 1)(n - n_i - n_{\ell_i}+1)+1 - t}{n_i(n - n_i - n_{\ell_i})(n - n_i - n_{\ell_i}+t)}+\frac{n_j - 1}{n_j(n - n_j - n_{\ell_j})}, & \text{if } u\in V_i^{p_i}, v\in V_j^{q_j};\\
    \frac{(n_i - 1)(n - n_i - n_{\ell_i}+1)+1 - t}{n_i(n - n_i - n_{\ell_i})(n - n_i - n_{\ell_i}+t)}+\frac{(n_j - 1)(n - n_j - n_{\ell_j}+1)+1 - t}{n_j(n - n_j - n_{\ell_j})(n - n_j - n_{\ell_j}+t)}, & \text{if } u\in V_i^{p_i}, v\in V_j^{p_j}.
\end
{cases}
\]
where
$R_2 = \frac{n - n_i}{n n_i(n - n_i - n_{\ell_i})}+\frac{n - n_j}{n n_j(n - n_j - n_{\ell_j})}$.

{\rm (ii)}{\rm (c)} If
$1\leq i\leq 2p$, $2p + 1\leq j\leq k$, $v_iv_{\ell_i}\in M$,
\[
R_G(u, v)= R_3 +
\begin
{cases}
    \frac{n_i - 1}{n_i(n - n_i - n_{\ell_i})}+\frac{n_j - 1}{n_j(n - n_j)}, & \text{if } u\in V_i^{q_i}, v\in V_j^{q_j};\\
    \frac{n_i - 1}{n_i(n - n_i - n_{\ell_i})}+\frac{(n_j - 1)(n - n_j + 1)+1 - t}{n_j(n - n_j)(n - n_j + t)}, & \text{if } u\in V_i^{q_i}, v\in V_j^{p_j};\\
    \frac{(n_i - 1)(n - n_i - n_{\ell_i}+1)+1 - t}{n_i(n - n_i - n_{\ell_i})(n - n_i - n_{\ell_i}+t)}+\frac{n_j - 1}{n_j(n - n_j)}, & \text{if } u\in V_i^{p_i}, v\in V_j^{q_j};\\
    \frac{(n_i - 1)(n - n_i - n_{\ell_i}+1)+1 - t}{n_i(n - n_i - n_{\ell_i})(n - n_i - n_{\ell_i}+t)}+\frac{(n_j - 1)(n - n_j + 1)+1 - t}{n_j(n - n_j)(n - n_j + t)}, & \text{if } u\in V_i^{p_i}, v\in V_j^{p_j}.
\end
{cases}
\]
where
$R_3=\frac{n - n_i}{n n_i(n - n_i - n_{\ell_i})}+\frac{1}{n n_j}$.

{\rm (ii)}{\rm (d)} If $2p + 1\leq i\neq j\leq k$,
\[
R_G(u, v)= R_4 +
\begin
{cases}
    \frac{n_i - 1}{n_i(n - n_i)}+\frac{n_j - 1}{n_j(n - n_j)}, & \text{if } u\in V_i^{q_i}, v\in V_j^{q_j};\\
    \frac{(n_i - 1)(n - n_i + 1)+1 - t}{n_i(n - n_i)(n - n_i + t)}+\frac{n_j - 1}{n_j(n - n_j)}, & \text{if } u\in V_i^{p_i}, v\in V_j^{q_j};\\
    \frac{(n_i - 1)(n - n_i + 1)+1 - t}{n_i(n - n_i)(n - n_i + t)}+\frac{(n_j - 1)(n - n_j + 1)+1 - t}{n_j(n - n_j)(n - n_j + t)}, & \text{if } u\in V_i^{p_i}, v\in V_j^{p_j}.
\end
{cases}
\]
where
$R_4 = \frac{1}{n n_i}+\frac{1}{n n_j}$.
\end{cor}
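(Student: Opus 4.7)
The plan is to derive Corollary \ref{c34} as a direct specialization of Theorem \ref{th31} to $H = K_k - pK_2$. For $1 \le i \le 2p$ write $v_{\ell_i}$ for the partner of $v_i$ in the matching $M$; then $N_H(v_i) = V(H)\setminus\{v_i, v_{\ell_i}\}$, so $\sum_{v_a\in N_H(v_i)} n_a = n - n_i - n_{\ell_i}$, while for $i > 2p$ the sum is $n - n_i$. Substituting these values into $r, r', r'', r'''$ from Theorem \ref{th31} immediately yields all the blob-level terms in the statement.

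For part (i), both (i)(a) and (i)(b) follow by plugging the appropriate value of $r$ into the four formulas of Theorem \ref{th31}(i) and simplifying; no contribution from $R_{H^{\bigtriangledown}}$ arises because $u$ and $v$ lie in a single blob, so each of these cases is a one-line rational identity.

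Part (ii) reduces to computing $R_{H^{\bigtriangledown}}(v_i, v_j)$ in the vertex-weighted graph $H^{\bigtriangledown}$, which here is $K_k - pK_2$ with vertex weights $w(v_a) = n_a$ and hence edge conductances $n_a n_b$ on the present edges. The four subcases (a)--(d) correspond exactly to the four possible positions the pair $(i,j)$ can occupy in $K_k - pK_2$: matched to each other, matched to distinct partners, one matched and one unmatched, and neither matched. For each subcase I would compute $R_{H^{\bigtriangledown}}(v_i, v_j)$ by the principle of substitution (Proposition \ref{th03}): apply the $K_{m,n}$-double-star transformation of Lemma \ref{le06} to the complete bipartite sub-network between $v_i$ and its neighbours in $H$ (and similarly at $v_j$), and then finish by series/parallel reductions on the small weighted network that remains. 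This is where the main friction lies: the absent matching edges destroy the transitive symmetry available for $K_k$, so the four subcases cannot be treated uniformly and each produces a different small residual network that must be reduced and simplified to match the compact form stated.

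Finally, substitute each computed $R_{H^{\bigtriangledown}}(v_i, v_j)$ into the formula of Theorem \ref{th31}(ii) and rearrange. The abbreviations $R_1, R_2, R_3, R_4$ in the statement are precisely the groupings of $r + r'' + r' + r'''$ that arise once the values of $r, r', r'', r'''$ have been plugged in for the four configurations of indices; the remaining summands in each bracketed case in (ii)(a)--(d) encode the independent choice, for each of $u$ and $v$, of which piece ($V_i^{p_i}$ or $V_i^{q_i}$) it lies in, inherited directly from Theorem \ref{th31}(ii). Once all pieces are assembled, the Kirchhoff-type sums simplify to the displayed rational expressions.
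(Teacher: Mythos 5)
Your overall route is the intended one: the paper states Corollary \ref{c34} without proof, as a direct specialization of Theorem \ref{th31}, and your reduction (compute $\sum_{v_a\in N_H(v_i)} n_a = n-n_i-n_{\ell_i}$ or $n-n_i$ according as $v_i$ is matched or not, substitute into part (i), and for part (ii) evaluate $R_{H^{\bigtriangledown}}(v_i,v_j)$ in the four possible configurations of the pair $(v_i,v_j)$ relative to the matching $M$) is exactly what is needed. Part (i) and the bracketed terms in part (ii) then follow by the same one-line rational simplifications that give Corollary \ref{c31}.

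Two remarks. First, your final paragraph misidentifies the bookkeeping: $R_1,\dots,R_4$ are \emph{not} the groupings of $r+r'+r''+r'''$; they are precisely the four values of $R_{H^{\bigtriangledown}}(v_i,v_j)$, while the case-dependent bracketed summands are the groupings $(r+r'')+(r'+r''')$, with $r$ (resp.\ $r'$) replaced by $\frac{r(r+1)}{tr+1}$ (resp.\ $\frac{r'(r'+1)}{tr'+1}$) when the corresponding endpoint lies in $V_i^{p_i}$ (resp.\ $V_j^{p_j}$). For instance, in case (ii)(a) one has $r+r''=\frac{n_i-1}{n_i(n-n_i-n_j)}$, which is the first bracketed term, and $R_1=\frac{n_i+n_j}{n_in_j(n-n_i-n_j)}=R_{H^{\bigtriangledown}}(v_i,v_j)$. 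This is not fatal -- carrying out your plan would force the correct identification -- but as written the last step does not match the formulas you are trying to prove, and it suggests the $R_{H^{\bigtriangledown}}$ computations were not actually performed. Second, for those computations the double-star transformation at each vertex is workable but unnecessarily heavy; the cleaner route, consistent with how the paper treats $K_k$ and $S_k$, is to view $K_k-pK_2$ with edge conductances $n_an_b$ as the weighted $K_k$ plus parallel edges of conductance $-n_an_b$ on the matched pairs, replace the weighted $K_k$ by a star with leaf resistances $\frac{1}{nn_a}$ via the mesh-star transformation, observe that the triangles formed by matched pairs containing neither $v_i$ nor $v_j$ dangle from the star center and may be deleted by Proposition \ref{th01}, and finish with one series--parallel reduction; this yields $R_1,\dots,R_4$ exactly as stated.
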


\begin{cor}\label{c35}
Let $H = K_k - S_d$ and
$G = H_{p_1, p_2, \cdots, p_k}^{q_1, q_2, \cdots, q_k}$ with $|V(G)|=n$.

        {\rm (i)}{\rm (a)} If $u, v\in V_1^{p_{1}}\cup V_1^{q_{1}}$,
        \[
        R_G(u, v)=
        \begin
{cases}
            \frac{2}{n - \sum\limits_{s = 1}^{d}n_s}, & \text{if } u, v\in V_1^{q_{1}};\\
            \frac{2(n - \sum\limits_{s = 1}^{d}n_s)+t + 1}{(n - \sum\limits_{s = 1}^{d}n_s)(t + n - \sum\limits_{s = 1}^{d}n_s)}, & \text{if } u\in V_1^{p_{1}}, v\in V_1^{q_{1}};\\
            \frac{2}{t + n - \sum\limits_{s = 1}^{d}n_s}, & \text{if } u, v\in V_1^{p_{1}}, uv\in E(G);\\
            \frac{2(1 + n - \sum\limits_{s = 1}^{d}n_s)}{(n - \sum\limits_{s = 1}^{d}n_s)(t + n - \sum\limits_{s = 1}^{d}n_s)}, & \text{if } u, v\in V_1^{p_{1}}, uv\notin E(G).
        \end
{cases}
        \]

        {\rm (i)}{\rm (b)} If $2\leq i\leq d$,
        \[
        R_G(u, v)=
        \begin
{cases}
            \frac{2}{n - n_1 - n_i}, & \text{if } u, v\in V_i^{q_{i}};\\
            \frac{2(n - n_1 - n_i)+t + 1}{(n - n_1 - n_i)(n - n_1 - n_i + t)}, & \text{if } u\in V_i^{p_{i}}, v\in V_i^{q_{i}};\\
            \frac{2}{n - n_1 - n_i + t}, & \text{if } u, v\in V_i^{p_{i}}, uv\in E(G);\\
            \frac{2(n - n_1 - n_i + 1)}{(n - n_1 - n_i)(n - n_1 - n_i + t)}, & \text{if } u, v\in V_i^{p_{i}}, uv\notin E(G).
        \end
{cases}
        \]

        {\rm (i)}{\rm (c)} If $d + 1\leq i\leq k$,
        \[
        R_G(u, v)=
        \begin
{cases}
            \frac{2}{n - n_i}, & \text{if } u, v\in V_i^{q_{i}};\\
            \frac{2(n - n_i)+t + 1}{(n - n_i)(n - n_i + t)}, & \text{if } u\in V_i^{p_{i}}, v\in V_i^{q_{i}};\\
            \frac{2}{n - n_i + t}, & \text{if } u, v\in V_i^{p_{i}}, uv\in E(G);\\
            \frac{2(n - n_i + 1)}{(n - n_i)(n - n_i + t)}, & \text{if } u, v\in V_i^{p_{i}}, uv\notin E(G).
        \end
{cases}
        \]

{\rm (ii)}{\rm (a)} If $u\in V_1^{p_{1}}\cup V_1^{q_{1}}$, $2\leq i\leq d$,
\[
R_G(u, v)= R_1' +
\begin
{cases}
    \frac{n_1 - 1}{n_1(n - \sum\limits_{s = 1}^{d}n_s)}+\frac{n_i - 1}{n_i(n - n_1 - n_i)}, & \text{if } u\in V_1^{q_{1}}, v\in V_i^{q_i};\\
    \frac{n_1 - 1}{n_1(n - \sum\limits_{s = 1}^{d}n_s)}+\frac{(n_i - 1)(n - n_1 - n_i + 1)+1 - t}{n_i(n - n_1 - n_i)(n - n_1 - n_i + t)}, & \text{if } u\in V_1^{q_{1}}, v\in V_i^{p_i};\\
    \frac{(n_1 - 1)(1 + n - \sum\limits_{s = 1}^{d}n_s)+1 - t}{n_1(n - \sum\limits_{s = 1}^{d}n_s)(t + n - \sum\limits_{s = 1}^{d}n_s)}+\frac{n_i - 1}{n_i(n - n_1 - n_i)}, & \text{if } u\in V_1^{p_{1}}, v\in V_i^{q_i};\\
    \frac{(n_1 - 1)(1 + n - \sum\limits_{s = 1}^{d}n_s)+1 - t}{n_1(n - \sum\limits_{s = 1}^{d}n_s)(t + n - \sum\limits_{s = 1}^{d}n_s)}+\frac{(n_i - 1)(n - n_1 - n_i + 1)+1 - t}{n_i(n - n_1 - n_i)(n - n_1 - n_i + t)}, & \text{if } u\in V_1^{p_{1}}, v\in V_i^{p_i}.
\end
{cases}
\]
where $R_1' = \frac{n n_i + n_1(n - \sum\limits_{s = 1}^{d}n_s)}{n_1n_i(n - n_1)(n - \sum\limits_{s = 1}^{d}n_s)}$.

{\rm (ii)}{\rm (b)} If
$u\in V_1^{p_{1}}\cup V_1^{q_{1}}$, $d + 1\leq i\leq k$,
\[
R_G(u, v)= R_2' +
\begin
{cases}
    \frac{n_1 - 1}{n_1(n - \sum\limits_{s = 1}^{d}n_s)}+\frac{n_i - 1}{n_i(n - n_i)}, & \text{if } u\in V_1^{q_{1}}, v\in V_i^{q_i};\\
    \frac{n_1 - 1}{n_1(n - \sum\limits_{s = 1}^{d}n_s)}+\frac{(n_i - 1)(n - n_i + 1)+1 - t}{n_i(n - n_i)(n - n_i + t)}, & \text{if } u\in V_1^{q_{1}}, v\in V_i^{p_i};\\
    \frac{(n_1 - 1)(1 + n - \sum\limits_{s = 1}^{d}n_s)+1 - t}{n_1(n - \sum\limits_{s = 1}^{d}n_s)(t + n - \sum\limits_{s = 1}^{d}n_s)}+\frac{n_i - 1}{n_i(n - n_i)}, & \text{if } u\in V_1^{p_{1}}, v\in V_i^{q_i};\\
    \frac{(n_1 - 1)(1 + n - \sum\limits_{s = 1}^{d}n_s)+1 - t}{n_1(n - \sum\limits_{s = 1}^{d}n_s)(t + n - \sum\limits_{s = 1}^{d}n_s)}+\frac{(n_i - 1)(n - n_i + 1)+1 - t}{n_i(n - n_i)(n - n_i + t)}, & \text{if } u\in V_1^{p_{1}}, v\in V_i^{p_i}.
\end
{cases}
\]
where $R_2' = \frac{n - n_1}{n n_1(n - \sum\limits_{s = 1}^{d}n_s)}+\frac{1}{n n_i}$.

{\rm (ii)}{\rm (c)} If $2\leq i\neq j\leq d$,
\[
R_G(u, v)= R_3' +
\begin
{cases}
    \frac{n_i - 1}{n_i(n - n_1 - n_i)}+\frac{n_j - 1}{n_j(n - n_1 - n_j)}, & \text{if } u\in V_i^{q_i}, v\in V_j^{q_j};\\
    \frac{(n_i - 1)(n - n_1 - n_i + 1)+1 - t}{n_i(n - n_1 - n_i)(n - n_i - n_i + t)}+\frac{n_j - 1}{n_j(n - n_1 - n_j)}, & \text{if } u\in V_i^{p_i}, v\in V_j^{q_j};\\
    \frac{(n_i - 1)(n - n_1 - n_i + 1)+1 - t}{n_i(n - n_1 - n_i)(n - n_1 - n_i + t)}+\frac{(n_j - 1)(n - n_1 - n_j + 1)+1 - t}{n_j(n - n_1 - n_j)(n - n_1 - n_j + t)}, & \text{if } u\in V_i^{p_i}, v\in V_j^{p_j}.
\end
{cases}
\]
where
$R_3' = \frac{n_i + n_j}{(n - n_i)n_in_j}$.

{\rm (ii)} {\rm (d)}  If $2\leq i\leq d$, $d + 1\leq j\leq k$,
\[
R_G(u, v)= R_4' +
\begin
{cases}
    \frac{n_i - 1}{n_i(n - n_1 - n_i)}+\frac{n_j - 1}{n_j(n - n_j)}, & \text{if } u\in V_i^{q_i}, v\in V_j^{q_j};\\
    \frac{n_i - 1}{n_i(n - n_1 - n_i)}+\frac{(n_j - 1)(n - n_j + 1)+1 - t}{n_j(n - n_j)(n - n_j + t)}, & \text{if } u\in V_i^{q_i}, v\in V_j^{p_j};\\
    \frac{(n_i - 1)(n - n_1 - n_i + 1)+1 - t}{n_i(n - n_1 - n_i)(n - n_1 - n_i + t)}+\frac{n_j - 1}{n_j(n - n_j)}, & \text{if } u\in V_i^{p_i}, v\in V_j^{q_j};\\
    \frac{(n_i - 1)(n - n_1 - n_i + 1)+1 - t}{n_i(n - n_i - n_i)(n - n_1 - n_i + t)}+\frac{(n_j - 1)(n - n_j + 1)+1 - t}{n_j(n - n_j)(n - n_j + t)}, & \text{if } u\in V_i^{p_i}, v\in V_j^{p_j}.
\end
{cases}
\]
where $R_4' = \frac{n_1n_i + n(n - \sum\limits_{s = 1}^{d}n_s)}{n n_i(n - n_1)(n - \sum\limits_{s = 1}^{d}n_s)}+\frac{1}{n n_j}$.

{\rm (ii)}{\rm (e)} If $d + 1\leq i\neq j\leq k$,
\[
R_G(u, v)= R_5' +
\begin
{cases}
    \frac{n_i - 1}{n_i(n - n_i)}+\frac{n_j - 1}{n_j(n - n_j)}, & \text{if } u\in V_i^{q_i}, v\in V_j^{q_j};\\
    \frac{(n_i - 1)(n - n_i + 1)+1 - t}{n_i(n - n_i)(n - n_i + t)}+\frac{n_j - 1}{n_j(n - n_j)}, & \text{if } u\in V_i^{p_i}, v\in V_j^{q_j};\\
    \frac{(n_i - 1)(n - n_i + 1)+1 - t}{n_i(n - n_i)(n - n_i + t)}+\frac{(n_j - 1)(n - n_j + 1)+1 - t}{n_j(n - n_j)(n - n_j + t)}, & \text{if } u\in V_i^{p_i}, v\in V_j^{p_j}.
\end
{cases}
\]
where
$R_5' = \frac{1}{n n_i}+\frac{1}{n n_j}$.
\end{cor}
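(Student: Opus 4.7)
The plan is to apply Theorem~\ref{th31} directly with $H = K_k - S_d$. The argument reduces to two steps: reading off $\sum_{v_a \in N_H(v_i)} n_a$ for each type of vertex in $H$ (which feeds the parameters $r, r', r'', r'''$ of Theorem~\ref{th31}), and then computing the resistance distance $R_{H^\bigtriangledown}(v_i, v_j)$ in the vertex-weighted skeleton $H^\bigtriangledown$ for each pair of vertex types.

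First I would partition $V(H)$ into three classes matching the adjacency pattern of $K_k - S_d$: the star center $v_1$ (adjacent only to $v_{d+1}, \ldots, v_k$), the leaves $v_2, \ldots, v_d$ (adjacent to all but $v_1$), and the ``hub'' vertices $v_{d+1}, \ldots, v_k$ (adjacent to every other vertex). This gives $\sum_{v_a \in N_H(v_1)} n_a = n - \sum_{s=1}^d n_s$, while the sum equals $n - n_1 - n_i$ for $2 \leq i \leq d$ and $n - n_i$ for $d+1 \leq i \leq k$. Part~(i) then follows by substituting each of the three $r$-values into the four same-block formulas of Theorem~\ref{th31}(i) and simplifying; all four reductions are routine algebra, and the three vertex classes produce the three displayed subcases (a)--(c).

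For part~(ii) I would invoke Theorem~\ref{th31}(ii) for each of the five inter-class pair types (center--leaf, center--hub, leaf--leaf, leaf--hub, hub--hub). The diagonal contributions combine cleanly via the identities $r + r'' = (n_i-1)/[n_i\sum_{v_a \in N_H(v_i)} n_a]$ and $\frac{r(r+1)}{tr+1} + r'' = [(n_i-1)(1/r + 1) + 1 - t]/[n_i(1/r)(t + 1/r)]$, which match the ``leg'' terms displayed in the statement. The remaining and main task is to determine the five values $R_1', \ldots, R_5'$ of $R_{H^\bigtriangledown}(v_i, v_j)$, which I would do by exploiting the symmetry of $K_k - S_d$ (the hubs are mutually interchangeable, as are the leaves) and reducing $H^\bigtriangledown$ via parallel/series principles together with the star-mesh transformation, carefully tracking the vertex-induced edge resistances $1/(n_in_j)$ through each collapse. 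Since $K_k - S_d$ is not vertex-transitive, each of the five pair types demands its own symmetry reduction, and this is where most of the real work lies; once $R_1', \ldots, R_5'$ are pinned down, the Kirchhoff index follows automatically by summing the case contributions weighted by the cardinalities of each pair type, exactly as in the proof of Theorem~\ref{th31}.
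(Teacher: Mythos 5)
Your proposal is correct and follows exactly the route the paper intends: Corollary \ref{c35} is stated without proof as a direct specialization of Theorem \ref{th31} to $H=K_k-S_d$, using precisely the neighbourhood sums $n-\sum_{s=1}^{d}n_s$, $n-n_1-n_i$, $n-n_i$ and the leg-term identities $r+r''=\frac{n_i-1}{n_i\sum_a n_a}$ and $\frac{r(r+1)}{tr+1}+r''$ that you write down, with the five values $R_1',\dots,R_5'$ obtained from the vertex-weighted skeleton $H^{\bigtriangledown}$ by the star-mesh and series/parallel reductions you describe. One remark: actually carrying out the deferred $H^{\bigtriangledown}$ computation shows that the printed $R_3'$ should read $\frac{n_i+n_j}{(n-n_1)n_in_j}$ rather than $\frac{n_i+n_j}{(n-n_i)n_in_j}$ (symmetry in $i\leftrightarrow j$ forces the factor $n-n_1$, as a check with $k=4$, $d=3$ confirms), but that is a typo in the statement rather than a defect of your argument.
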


\begin{figure}[H]
\centering
\includegraphics[width=0.75\textwidth]{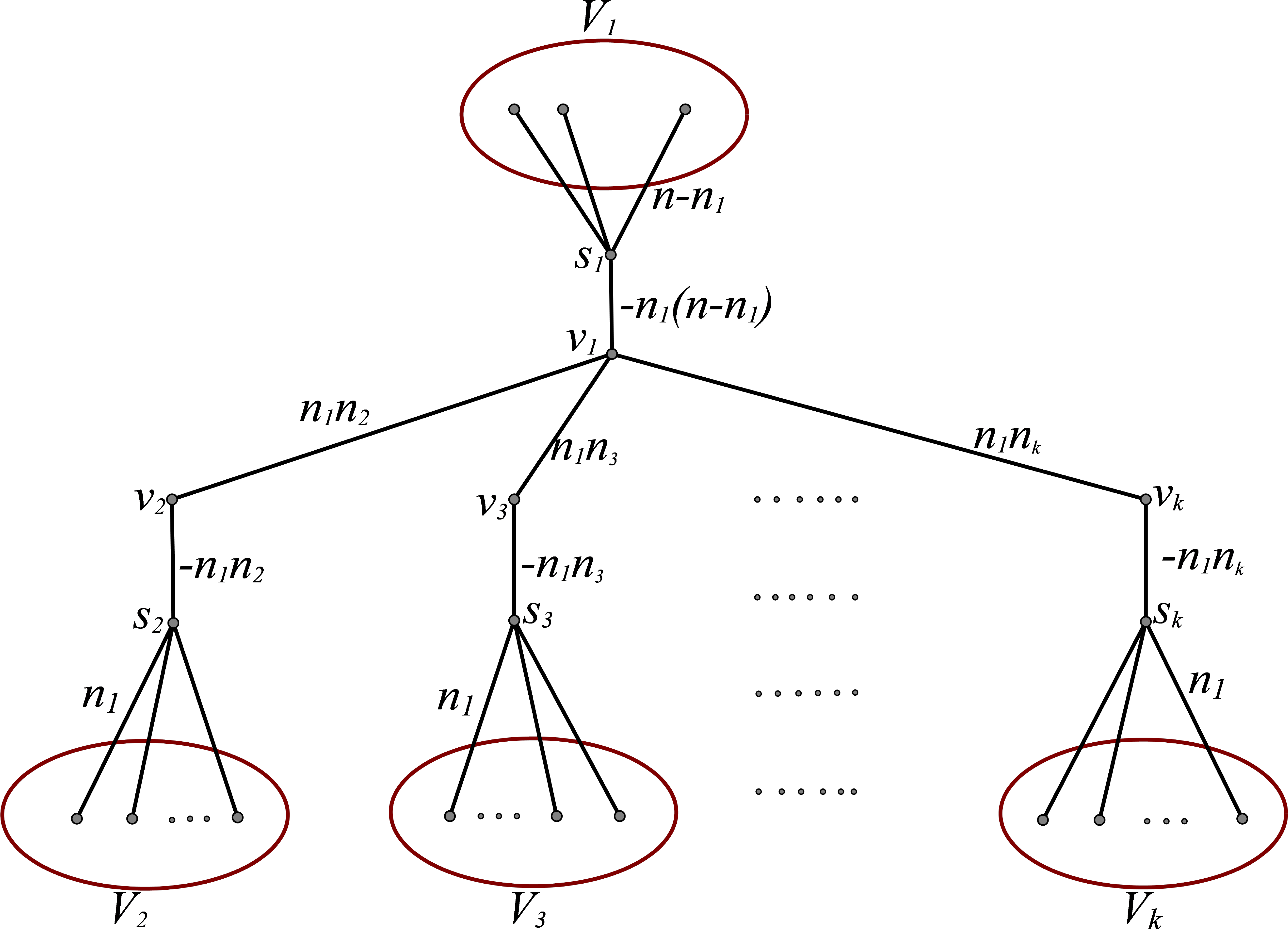}\\
\caption{The graph $S\star(n_1, n_2, \cdots, n_k)^{\omega}$ of Corollary \ref{c36}.
}\label{fig9}
\end{figure}

\begin{cor}\label{c36}
Let $H = S_k$ and $G = H_{p_1, p_2, \cdots, p_k}^{q_1, q_2, \cdots, q_k}$ with $|V(G)|=n$.

                {\rm (i)} If $u, v\in V_1^{p_{1}}\cup V_1^{q_{1}}$,
                \[
                R_G(u, v)=
                \begin
{cases}
                    \frac{2}{n - n_1}, & \text{if } u, v\in V_1^{q_{1}};\\
                    \frac{2(n - n_1)+t + 1}{(n - n_1)(t + n - n_1)}, & \text{if } u\in V_1^{p_{1}}, v\in V_1^{q_{1}};\\
                    \frac{2}{t + n - n_1}, & \text{if } u, v\in V_1^{p_{1}}, uv\in E(G);\\
                    \frac{2(1 + n - n_1)}{(n - n_1)(t + n - n_1)}, & \text{if } u, v\in V_1^{p_{1}}, uv\notin E(G).
                \end
{cases}
                \]

                {\rm (ii)} If $2\leq i\leq k$,
                \[
                R_G(u, v)=
                \begin
{cases}
                    \frac{2}{n_1}, & \text{if } u, v\in V_i^{q_{i}};\\
                    \frac{2n + t + 1}{n_1(n_1 + t)}, & \text{if } u\in V_i^{p_{i}}, v\in V_i^{q_{i}};\\
                    \frac{2}{n_1 + t}, & \text{if } u, v\in V_i^{p_{i}}, uv\in E(G);\\
                    \frac{2(n_1 + 1)}{n_1(n_1 + t)}, & \text{if } u, v\in V_i^{p_{i}}, uv\notin E(G).
                \end
{cases}
                \]

                {\rm (iii)} If $u\in V_1^{p_{1}}\cup V_1^{q_{1}}$, $2\leq i\leq k$,
                \[
                R_G(u, v)=
                \begin
{cases}
                    \frac{n - 1}{n_1(n - n_1)}, & \text{if } u\in V_1^{q_{1}}, v\in V_i^{q_{i}};\\
                    \frac{n_{1} - 1}{n_1(n - n_1)}+\frac{n_{1} + 1}{n_1(t + n_1)}, & \text{if } u\in V_1^{q_{1}}, v\in V_i^{p_{i}};\\
                    \frac{n - n_1 + 1}{(n - n_1)(n - n_1 + t)}+\frac{n - n_1 - 1}{n_1(n - n_1)}, & \text{if } u\in V_1^{p_{1}}, v\in V_i^{q_{i}};\\
                    \frac{(n_1 - 1)(n - n_1 + 1)+1 - t}{n_1(n - n_1)(n - n_1 + t)}+\frac{n + 1}{n_1(t + n_1)}, & \text{if } u\in V_1^{p_{1}}, v\in V_i^{p_{i}}.
                \end
{cases}
                \]

                {\rm (iv)} If $2\leq i\neq j\leq k$,
                \[
                R_G(u, v)=
                \begin
{cases}
                    \frac{2}{n_1}, & \text{if } u\in V_i^{q_{i}}, v\in V_j^{q_{j}};\\
                    \frac{2n_1 + t + 1}{n_1(n_1 + t)}, & \text{if } u\in V_i^{p_{i}}, v\in V_j^{q_{j}};\\
                    \frac{2(n_1 + 1)}{n_1(n_1 + t)}, & \text{if } u\in V_i^{p_{i}}, v\in V_j^{p_{j}}.
                \end
{cases}
                \]

\end{cor}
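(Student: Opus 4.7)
The plan is to specialize Theorem~\ref{th31} to $H = S_k$ with center $v_1$ and leaves $v_2, \ldots, v_k$. The star's adjacency collapses the neighborhood sums: $\sum_{v_a \in N_H(v_1)} n_a = n - n_1$, and for every leaf $v_i$ (with $i \ge 2$) $\sum_{v_a \in N_H(v_i)} n_a = n_1$. Substituting these into the definitions of $r, r', r'', r'''$ yields just two value patterns, depending on whether the index refers to the center or a leaf. This is the only input Theorem~\ref{th31} requires, apart from $R_{H^{\bigtriangledown}}(v_i, v_j)$.

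For Case (i), with both vertices inside the center block, I set $r = \frac{1}{n - n_1}$ and plug directly into Theorem~\ref{th31}(i); combining terms over the common denominator $(n-n_1)(n-n_1+t)$ produces the four displayed formulas. Case (ii) is entirely analogous with $r = \frac{1}{n_1}$ and denominator $n_1(n_1+t)$. Both cases reduce to routine algebraic simplification.

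For the cross-block Cases (iii) and (iv), I first compute $R_{H^{\bigtriangledown}}(v_i, v_j)$ on the vertex-weighted star. Since $H^{\bigtriangledown}$ inherits edges $v_1 v_i$ of resistance $\frac{1}{n_1 n_i}$ and $v_1$ is a cut vertex, Lemma~\ref{le32} gives $R_{H^{\bigtriangledown}}(v_1, v_i) = \frac{1}{n_1 n_i}$ for $i \ge 2$, and $R_{H^{\bigtriangledown}}(v_i, v_j) = \frac{1}{n_1 n_i} + \frac{1}{n_1 n_j}$ for $2 \le i \ne j \le k$. The decisive observation, which I expect to drive the clean formulas in (iv), is that $R_{H^{\bigtriangledown}}(v_i, v_j) + r'' + r''' = 0$ identically for $2 \le i \ne j \le k$: the $-\frac{1}{n_1 n_i}$ and $-\frac{1}{n_1 n_j}$ coming from $r''$ and $r'''$ exactly cancel the series resistance through $v_1$. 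In Case (iii), only $r'''$ cancels against $R_{H^{\bigtriangledown}}(v_1, v_i)$, leaving the residue $-\frac{1}{n_1(n-n_1)}$.

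With these reductions in hand, what remains is to plug the four subcases of Theorem~\ref{th31}(ii) into each configuration and to collect the resulting fractions. I expect the main obstacle to be bookkeeping — tracking when $\frac{r(r+1)}{tr+1}$ versus $r$ appears on each side (i.e., whether the corresponding endpoint lies in a $V^{p}$-part or a $V^{q}$-part) and combining these with the residue identified above over the denominators $n_1(n-n_1)(n-n_1+t)$ and $n_1(n_1+t)$ — rather than any conceptual difficulty. A useful sanity check at the end is that in Case (iv) the formulas depend only on $n_1$ and $t$, as predicted by the total cancellation $R_{H^{\bigtriangledown}}(v_i, v_j) + r'' + r''' = 0$.
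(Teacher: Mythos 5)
Your proposal is correct and takes essentially the same route as the paper's own proof: specialize the general resistance-distance theorem (Theorem~\ref{th31}) to $H=S_k$, compute the neighborhood sums $n-n_1$ (center) and $n_1$ (leaves) together with $R_{H^{\bigtriangledown}}(v_1,v_i)=\frac{1}{n_1n_i}$ and $R_{H^{\bigtriangledown}}(v_i,v_j)=\frac{1}{n_1n_i}+\frac{1}{n_1n_j}$ on the vertex-weighted star, and let the terms $r''+r'''$ cancel against the series resistance through the center. Incidentally, your (correct) cancellation argument yields $\frac{2n_1+t+1}{n_1(n_1+t)}$ in case (ii) and $\frac{n_1+1}{n_1(t+n_1)}$ in the last line of case (iii), which shows that the $2n+t+1$ and $n+1$ appearing in the stated corollary are typographical slips for $2n_1+t+1$ and $n_1+1$ (as your sanity check predicts, nothing in a leaf block or between leaf blocks can depend on $n$ beyond $n_1$ and $t$).
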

\begin{proof}
Let $S_k^{\bigtriangledown}$ denote a vertex-weighted star graph with $w(v_i)=n_i$ for $v_i\in V(S_k^{\bigtriangledown})$, where $n = \sum\limits_{i = 1}^{k}n_i$.
The graph $G$ contains a spanning subgraph $S[n_{1},n_{2},\cdots,n_{k}]$ whose electrically equivalent graph appears in Figure \ref{fig9}.
By calculation, we have $R_{S_k^{\bigtriangledown}}(v_1, v_i)=\frac{1}{n_1n_i}$, and $R_{S_k^{\bigtriangledown}}(v_i, v_j)=\frac{1}{n_1n_i}+\frac{1}{n_1n_j}$ for $2\leq i\neq j\leq k$.
We also have
$r_{s_1 v_1^{o_{i}}}=\frac{1}{n - n_1}$, $r_{s_1 v_1}=\frac{-1}{n_1(n - n_1)}$, $r_{s_i v_i}=\frac{-1}{n_1n_i}$, $r_{s_i v_i^{o_{i}}}=\frac{1}{n_1}$, where $2\leq i\leq k$, $1\leq o_{i}\leq n_i$. By Theorem \ref{th31}, we completes the proof.
\end{proof}

A \textbf{generalized core-satellite graph} \cite{estr2017} connects multiple satellite cliques $K_{n_{i}}$ $(i = 2,3,\cdots,k)$ of arbitrary sizes to a central core clique $K_{n_{1}}$, where every vertex in each satellite clique maintains full connectivity with the core clique. Satellite cliques may contain different numbers of vertices, but all share complete bipartite connections with the central core $K_{n_{1}}$.
Let $a_{i}\geq 1$ and $n\geq 1$, the generalized core-satellite graph $G=S\left[K_{n_i}\right]_1^k$ is the graph with $K_{n_{1}}$ as the core clique and $K_{n_{i}}$ $i = 2,3,\cdots,k$ as the as the satellite clique.

Here, we present an alternative proof for the Theorem 3.1 (main result) of \cite{nipa2025} and calculate the Kirchhoff index of the graph. The proof method is mainly based on Corollary \ref{c36}.
\begin{thm}[Ni, Pan and Zhou \cite{nipa2025}]\label{th32}
Let $G=S\left[K_{n_i}\right]_1^k$ be a generalized core-satellite graph with $|V(G)|=n$, where $n_i\geq1$, $V_i = V(K_{n_i})$, $i = 1,2,\cdots,k$ and $k\geq1$. Note that $n=\sum\limits_{i = 1}^{k}n_i$. Then

    {\rm (i)} if $u, v\in V_1$,
    \[R_G(u, v)=\frac{2}{n}.\]

    {\rm (ii)} if $u\in V_1$, $v\in V_i$, $i = 2,3,\cdots,k$,
    \[R_G(u, v)=\frac{n_1-1}{nn_1}+\frac{1+n_{1}}{n_1(n_1 + n_i)}.\]

    {\rm (iii)} if $u, v\in V_i$, $i = 2,3,\cdots,k$,
    \[R_G(u, v)=\frac{2}{n_1 + n_i}.\]

    {\rm (iv)} if $u\in V_i$, $v\in V_j$, $2\leq i< j\leq k$,

    \[R_G(u, v)=\frac{n_{1} + 1}{n_1(n_1 + n_i)}+\frac{n_{1} + 1}{n_1(n_1 + n_j)}.\]

Further, we have
$$Kf(S\left[K_{n_i}\right]_1^k)=\binom{n_{1}}{2}r_1''+n_1\left(\sum_{i = 2}^{k}n_i\right)r_2''+\sum_{i = 2}^{k}\binom{n_i}{2}r_3''+\sum_{i = 2}^{k-1 }\{n_{i}\sum_{j=i+1}^{k}n_{j}\}r_4'',$$
where $r_{i}''$ $(i=1,2,\cdots,4)$ is defined in the proof.
\end{thm}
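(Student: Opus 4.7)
The plan is to mimic the argument used in Corollary \ref{c36}: transform $G = S[K_{n_i}]_1^k$ into a $V(G)$-equivalent weighted network whose pairwise resistances can be read off using only the series principle, Lemma \ref{le2}, and the cut-vertex formula of Lemma \ref{le32}. The key observation is that $G$ contains the blow-up $S_k[n_1, n_2, \ldots, n_k]$ as a spanning subgraph; the edges of $G$ not belonging to this subgraph are precisely those internal to each clique $K_{n_i}$. By Lemma \ref{le31}, this spanning subgraph is $V$-equivalent to the edge-weighted graph $S_k\star(n_1, \ldots, n_k)^{\omega}$ shown in Figure \ref{fig9}. The principle of substitution then produces a weighted network $G^{\star}$, $V(G)$-equivalent to $G$, whose local structure around each $v_i$ is the clique $K_{n_i}$ together with an auxiliary vertex $s_i$ joined by pendant edges to every vertex of $K_{n_i}$, plus a single weighted edge from $s_i$ to $v_i$.

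I would then record the relevant edge resistances of $G^{\star}$: at the core we have $r_{s_1 v_1^{o_1}} = \frac{1}{n-n_1}$ and $r_{s_1 v_1} = \frac{-1}{n_1(n-n_1)}$, while at each satellite we have $r_{s_i v_i^{o_i}} = \frac{1}{n_1}$ and $r_{s_i v_i} = \frac{-1}{n_1 n_i}$ for $2 \le i \le k$. A direct computation on the vertex-weighted star $S_k^{\bigtriangledown}$ (which is what $H^{\bigtriangledown}$ becomes here) yields $R_{S_k^{\bigtriangledown}}(v_1, v_i) = \frac{1}{n_1 n_i}$ and $R_{S_k^{\bigtriangledown}}(v_i, v_j) = \frac{1}{n_1 n_i} + \frac{1}{n_1 n_j}$ for $2 \le i \neq j \le k$.

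With $G^{\star}$ in hand, I would dispatch the four cases as follows. Cases (i) and (iii), where $u$ and $v$ lie in the same clique, are immediate applications of Lemma \ref{le2} to the single gadget $K_{n_1} + s_1$ (with $r = \frac{1}{n-n_1}$) or $K_{n_i} + s_i$ (with $r = \frac{1}{n_1}$). Cases (ii) and (iv) are handled by recognizing that each $s_i$, together with $v_i$, provides a cut-vertex separation of $G^{\star}$: to travel from a vertex inside the $i$-th clique to a vertex inside the $j$-th, one must pass through $s_i \to v_i \to \cdots \to v_j \to s_j$. Lemma \ref{le32} then decomposes $R_{G^{\star}}(u,v)$ into a sum of shorter resistances along this path, each evaluated by the series principle combined with Lemma \ref{le2}, and one recovers the stated closed forms after simplification.

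Finally, the Kirchhoff index follows by multiplicity counting: there are $\binom{n_1}{2}$ pairs of type (i), $n_1\sum_{i=2}^k n_i$ pairs of type (ii), $\sum_{i=2}^k \binom{n_i}{2}$ pairs of type (iii), and $\sum_{2 \le i < j \le k} n_i n_j$ pairs of type (iv), yielding the claimed formula with $r_1'', r_2'', r_3'', r_4''$ set to the respective resistances. The principal obstacle is the algebraic tidying in cases (ii) and (iv): because $G^{\star}$ carries both positive and negative edge weights, combining the star contribution $\frac{1}{n_1 n_i}$ with the negative terms $\frac{-1}{n_1(n-n_1)}$ and $\frac{-1}{n_1 n_i}$ and with the positive Lemma \ref{le2} terms demands careful bookkeeping of signs and common denominators to collapse everything into the compact expressions asserted in the theorem.
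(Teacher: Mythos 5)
Your proposal is correct and follows essentially the same route as the paper: replace the spanning blow-up $S[n_1,\ldots,n_k]$ by $S\star(n_1,\ldots,n_k)^{\omega}$ via Lemma \ref{le31} and the substitution principle, read off the within-clique resistances from Lemma \ref{le2}, decompose the cross-clique resistances through the cut vertices $s_i$ and $v_i$ using Lemma \ref{le32}, and finish the Kirchhoff index by the same multiplicity count. The only cosmetic difference is that the paper imports the final resistance values from Corollary \ref{c36} rather than recomputing them, but the underlying calculation (including the cancellation of the $\frac{1}{n_1 n_i}$ terms against the negative edge weights) is identical to what you describe.
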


\begin{proof}
Without loss of generality, we let
\begin{align*}
r_1''&=R_G(u, v),
\text{ for } u, v\in V_1;\\
r_2''&=R_G(u, v),
\text{ for } u\in V_1, v\in V_i, i = 2,3,\cdots,k;\\
r_3''&=R_G(u, v),
\text{ for } u, v\in V_i, i = 2,3,\cdots,k;\\
r_4''&=R_G(u, v),
\text{ for } u\in V_i, v\in V_j, 2\leq i< j\leq k.
\end{align*}

Observe that
$G$ contains a spanning subgraph $S[n_1, n_2, \cdots, n_k]$. Applying Lemma \ref{le31}, we can substitute this subgraph with its electrically equivalent graph $S\star(n_1, n_2, \cdots, n_k)^{\omega}$ (see Figure \ref{fig9}), and we obtain an edge-weighted graph $G^{\star}$ that preserves $V(G)$-equivalent with $G$ via the substitution principle. Subsequently,
\[R_G(u, v)=R_{G^{\star}}(u, v),\text{ for any } u, v\in V(G), u\neq v.\]

By construction, the graph $G^{\star}$ is formed by $S\star(n_1, n_2, \cdots, n_k)^{\omega}$ by adding $\bigcup\limits_{i = 1}^{k}E(K_{n_i})$. Subsequently, $V(G^{\star})=V(S)\cup\bigcup\limits_{i = 1}^{k}(V(K_{n_i})\cup s_i)$, and
\[
E(G^{
\star})=E(S)\cup\bigcup_{1\leq i\leq k}\left(\{s_iv_i^{o_{i}}:v_i^{o_{i}}\in V(K_{n_i}),1\leq o_{i}\leq n_i\}\cup\{s_iv_i\}\cup E(K_{n_i})\right), \]
with resistances of edges in $G^{\star}$ for $1\leq i\neq j\leq k$ satisfying the following relations
\begin{align*}
r_{v_iv_j}&=
\frac{1}{n_in_j},\text{ for } v_iv_j\in E(S), 1\leq i\neq j \leq k,\\
r_{s_1v_1^{o_{i}}}&=\frac{1}{n - n_1},\text{ for } 1\leq o_{i}\leq n_1,\\
r_{s_1v_1}&=\frac{-1}{n_1(n - n_1)},\\
r_{s_iv_i}&=\frac{-1}{n_1 n_i}, \text{for } 2\leq i\leq k,\\
r_{s_iv_i^{o_{i}}}&=\frac{1}{n_1}, \text{for } 2\leq i\leq k, 1\leq o_{i}\leq n_{i},\\
r_{e}& = 1,  \text{ for } e\in E(K_{n_i}), 1\leq i\leq k.
\end{align*}

By Corollary \ref{c36} and Lemma \ref{le2}, we have
\begin{align*}
r_1''&=
\frac{2}{n_1 + n - n_1}=\frac{2}{n},\\
r_2''&=
\frac{(n_1 - 1)(1 + n - n_1)+1 - n_1}{n_1(n - n_1)(n - n_1 + n_1)}+\frac{n_1 + 1}{n_1(n_1 + n_i)}=\frac{n_1 - 1}{n n_1}+\frac{1 + n_1}{n_1(n_1 + n_i)},\\
r_3''&=
\frac{2}{n_1 + n_i},\\
r_4''&=
\frac{n_1 + 1}{n_1(n_1 + n_i)}+\frac{n_1 + 1}{n_1(n_1 + n_j)}.
\end{align*}

We can also calculate the Kirchhoff index of the graph $S\left[K_{n_i}\right]_1^k$.
$$Kf(S\left[K_{n_i}\right]_1^k)=
\binom{n_{1}}{2}r_1''+n_1\left(\sum_{i = 2}^{k}n_i\right)r_2''+\sum_{i = 2}^{k}\binom{n_i}{2}r_3''+\sum_{i = 2}^{k-1 }\{n_{i}\sum_{j=i+1}^{k}n_{j}\}r_4''.$$

The proof is completed.
\end{proof}


\vspace{5mm}
\noindent
{\bf Declaration of competing interest}
\vspace{3mm}

The authors declare that they have no known competing financial interests or personal relationships that could have appeared to influence the work reported in this paper.

\vspace{5mm}
\noindent
{\bf Data availability}
\vspace{3mm}

No data was used for the research described in the article.

\vspace{5mm}
\noindent
{\bf Acknowledgements}
\vspace{3mm}

The authors thank genuinely the anonymous reviewers for their helpful comments on this paper.



\end{document}